\theoremstyle{plain}
\newtheorem{thm}{Theorem}[section]
\newtheorem{lem}[thm]{Lemma}
\newtheorem{prop}[thm]{Proposition}
\newtheorem{cor}[thm]{Corollary}
\theoremstyle{definition}
\newcommand*{\sheafhom}{\mathscr{H}\kern -.5pt om}
\newcommand{\pic}{\textnormal{Pic}}
\let\ker\relax
\newcommand{\sing}{\textnormal{Sing}}
\newcommand{\ker}{\textnormal{Ker}}
\newcommand{\defeq}{\vcentcolon=}
\newcommand{\sym}{\textnormal{Sym}}
\tikzset{node distance=1.2cm,auto}
\begin{document}
\title[\parbox{11.5cm}{\centering{Variety of singular quadrics containing a projective curve}}]{Variety of singular quadrics containing a projective curve}
\author{\.Irfan Kadik\"oyl\"u}
\address{Humboldt-Universit\"at zu Berlin, Institut f\"ur Mathematik, 10099
Berlin}
\email{irfankadikoylu@gmail.com}

\maketitle
\begin{abstract}
We study the variety of quadrics of rank at most $k$ in $\mathbb{P}^r$, containing a general projective curve of genus $g$ and degree $d$ and show that it has the expected dimension in the range $g-d+r\leq 1$. By considering the loci where this expectation is not true, we construct new divisor classes in $\overline{\mathcal{M}}_{g,n}$. We use one of these classes to show that $\overline{\mathcal{M}}_{15,9}$ is of general type.
\end{abstract}

\section{Introduction}
One of the celebrated problems of the theory of algebraic curves is the maximal rank conjecture, which predicts that the natural multiplication maps 
\begin{equation*}
\mu_k: \sym^k H^0\left(C,L\right)\to H^0\left(C,L^{\otimes k}\right)
\end{equation*}
are of maximal rank (i.e. either injective or surjective) for a general choice of $C$ and $L$ in the range where the Brill-Noether number is nonnegative. The original formulation of the conjecture is due to Harris \cite{Ha} and it amounts to showing that the dimension of the variety of hypersurfaces containing $C$ is the least possible. There are plenty of partial results confirming the conjecture in different special cases. We refer the reader to \cite{Ka} for a short account of these partial results. We note in particular that the quadratic case (i.e. $k=2$) of the conjecture, which is also the focus of the present work, is completely proven in the papers \cite{BF} and \cite{JP}.

In this paper we study the quadratic case of the problem from a refined perspective by taking also the ranks of the quadrics into account. Precisely, we let $C$ be a general curve of genus $g$ and $\ell$ be a general $g^r_d$ on it. We denote by $Q_k(C,\ell)$ the projective variety of quadrics of rank at most $k$ containing the image of $C$ under the map given by the linear series $|\ell|$. Since the codimension of the variety of rank $k$ quadrics in $|\mathcal{O}_{\mathbb{P}^r}(2)|$ equals $r-k+2\choose 2$, the expected dimension of $Q_k(C,\ell)$ is equal to
\begin{equation*}
q(g,r,d,k) \defeq {r+2\choose 2}-{r-k+2\choose 2}-2d+g-2.
\end{equation*}
In Theorem \ref{main thm} we confirm this expectation in the range $g-d+r\leq 1$:
\begin{thm}\label{main thm}
Let $C$ be a general curve of genus $g$ and $\ell$ be a general $g^r_d$ on $C$ where $g-d+r\leq 1$ and the Brill-Noether number $\rho(g,r,d)$ is nonnegative. Then the variety $Q_k(C,\ell)$ is of pure dimension $q(g,r,d,k)$. In particular, $Q_k(C,\ell)=\emptyset$ if $q(g,r,d,k) < 0$.
\end{thm}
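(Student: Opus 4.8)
The plan is to realize $Q_k(C,\ell)$ as the intersection of the symmetric determinantal variety $\mathcal{R}_k\subseteq|\mathcal{O}_{\mathbb{P}^r}(2)|$ of quadrics of rank $\le k$ with the linear space $\Lambda\defeq\mathbb{P}(\ker\mu_2)$ of quadrics containing $C$, and to control this intersection stratum by stratum. First I would record the \emph{lower bound}. The proven quadratic case of the maximal rank conjecture \cite{BF,JP} asserts that $\mu_2$ has maximal rank; since $g-d+r=h^1(C,L)\le 1$ forces $h^0(L^{\otimes2})=2d-g+1$, one checks that whenever $q(g,r,d,k)\ge 0$ the map $\mu_2$ is surjective, so $\dim\Lambda=\binom{r+2}{2}-2d+g-2$. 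As $\mathcal{R}_k$ is irreducible of codimension $\binom{r-k+2}{2}$ and every component of the intersection of a variety of codimension $c$ with a linear space has codimension at most $c$ in that space, we obtain $\dim Q_k(C,\ell)\ge q(g,r,d,k)$. Consequently the entire statement — purity when $q\ge 0$ and emptiness when $q<0$ — reduces to the single inequality $\dim Q_k(C,\ell)\le q(g,r,d,k)$.

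For the upper bound I would exploit the cone structure of low-rank quadrics. A quadric of rank $\le k$ is a cone over a smooth quadric in a $\mathbb{P}^{k-1}$; equivalently it is the pullback of a quadric on $\mathbb{P}^{k-1}$ under the linear projection $\pi_V\colon\mathbb{P}^r\dashrightarrow\mathbb{P}^{k-1}$ associated with a $k$-dimensional subspace $V\subseteq H^0(C,L)$. Such a cone contains $C$ precisely when the quadric on $\mathbb{P}^{k-1}$ contains the projected curve $\pi_V(C)$, i.e.\ lies in $\ker(\mu_V\colon\sym^2 V\to H^0(L^{\otimes2}))$. Hence $Q_k(C,\ell)$ is the image of the morphism $\Psi\to|\mathcal{O}_{\mathbb{P}^r}(2)|$, $(V,Q)\mapsto\pi_V^{*}Q$, where
\begin{equation*}
\Psi=\left\{(V,Q)\;:\;V\in G(k,H^0(C,L)),\ Q\in\mathbb{P}\big(\ker(\mu_V\colon\sym^2 V\to H^0(L^{\otimes2}))\big)\right\};
\end{equation*}
this morphism is generically finite over the locus of quadrics of rank exactly $k$, so that $\dim Q_k(C,\ell)\le\dim\Psi$. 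When $\mu_V$ has maximal rank for a \emph{general} $V$, the contribution of the open stratum of $G(k,H^0(C,L))$ to $\dim\Psi$ equals
\begin{equation*}
k(r-k+1)+\binom{k+1}{2}-(2d-g+1)-1=q(g,r,d,k),
\end{equation*}
using the identity $\binom{r+2}{2}-\binom{r-k+2}{2}=k(r-k+1)+\binom{k+1}{2}$. Thus on the open stratum the upper bound follows from maximal rank of the restricted map $\mu_V$ for general $V$.

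The main obstacle is to control the \emph{bad loci}, namely the subvarieties of $G(k,H^0(C,L))$ along which $\ker\mu_V$ jumps: there the fibre of $\Psi$ grows, and one must show that the dimension of the corresponding stratum drops fast enough that its contribution to $\dim\Psi$ never exceeds $q(g,r,d,k)$. I would treat both the genericity of maximal rank of $\mu_V$ and the estimate on the degeneracy strata by a specialization argument, degenerating $(C,\ell)$ to a suitable reducible nodal curve (assembled from a rational normal curve by attaching components realizing the prescribed $g$ and $d$) for which the maps $\mu_V$ and their degeneracy loci can be computed explicitly, and then invoking upper-semicontinuity of fibre dimension in the resulting family. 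The hypothesis $g-d+r=h^1(C,L)\le 1$ is precisely what keeps the target $H^0(L^{\otimes2})$ nonspecial and the relevant restriction maps surjective throughout the degeneration, so that the excess-dimension estimate on every stratum — the genuinely delicate point — can be reduced to the expected-codimension behaviour of symmetric degeneracy loci.
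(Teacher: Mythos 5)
Your lower bound and the reduction to the single inequality $\dim Q_k(C,\ell)\le q(g,r,d,k)$ match the paper, and your cone-structure parametrization of $Q_k$ by the Grassmannian of $k$-dimensional quotients is a genuine generalization of the pencil correspondence that the paper uses only in the rank $3$ and $4$ base cases. But the proposal has a gap exactly where the theorem lives: you assert that the dimension of the strata of $G(k,\cdot)$ where $\ker\mu_V$ jumps ``can be reduced to the expected-codimension behaviour of symmetric degeneracy loci.'' That behaviour is not automatic --- symmetric degeneracy loci always have \emph{at least} the expected dimension, and bounding them from above is precisely the content of maximal-rank-type statements. So your reformulation replaces one determinantal upper bound (for $\mu_2$ restricted to $\mathcal{R}_k$) by a whole family of them (for $\mu_V$ over every stratum of a Grassmannian), which is at least as hard as what you started with; nothing has been proved. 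The concluding appeal to ``a suitable reducible nodal curve \ldots for which the maps $\mu_V$ and their degeneracy loci can be computed explicitly'' names the right tool (specialization plus upper semicontinuity is indeed how the paper proceeds) but supplies none of the content.

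For comparison, the paper's actual argument consists of three concrete ingredients that your sketch would still need. First, the base cases: for canonical curves, rank $3$ quadrics are excluded via the base-point-free pencil trick and the Gieseker--Petri theorem, rank $4$ quadrics are counted via the correspondence $K_C=A_1+A_2+F$ and Brill--Noether theory, and the case $k\ge 5$ is handled by an incidence-variety argument exploiting the projective equivalence of all rank $4$ quadrics to compute the fibre dimension of $\{(Q,C):C\subseteq Q\}\to Q_k(\mathbb{P}^{g-1})$; rational normal curves are treated similarly. Second, the inductive step attaches a general $2$-secant line $\ell$ to pass from $\mathcal{H}_{d,g,r}$ to $\mathcal{H}_{d+1,g+1,r}$ (Ballico--Ellia), and the required drop of the dimension by one follows from Catalano-Johnson's theorem that the secant variety of a nondegenerate curve lies on no quadric. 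Third, incomplete series $g-d+r<0$ are handled by projecting from a general centre and analysing the incidence variety of pairs (quadric, linear space contained in its vertex). You should either carry out your Grassmannian stratification with an honest induction of this kind, or adopt the paper's direct degeneration of $Q_k$ itself; as written, the ``genuinely delicate point'' you flag is the entire theorem.
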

Some parts of the rank 4 case of Theorem \ref{main thm} are already known in the literature. The dimension of the variety $Q_4(C,K_C)$ was computed by Andreotti and Mayer \cite{AM} using the correspondence between rank 4 quadrics and pencils on the curve. Using the same method Zamora \cite{Za} has computed the dimension of $Q_4(C,\ell)$ for linear systems $\ell$ of large degree.

There are valid reasons to expect that the failure loci of Theorem \ref{main thm} give rise to interesting cycles in moduli spaces. In a recent work, Farkas and Rim\'{a}nyi have studied loci of this form in different moduli spaces and obtained numerous interesting divisor classes \cite{FR}. In an analogous way, we use Theorem \ref{main thm} to construct new divisors on $\overline{\mathcal{M}}_{g,n}$:

We fix integers $g,n,k$ such that $4\leq k\leq g-n$ and 
\begin{equation*}
q(g,g-n-1,2g-2-n,k)=-1,
\end{equation*}
and define the locus
\begin{equation*}
\mathfrak{Quad}^k_{g,n}=\left\lbrace[C,p_1,\dots,p_n]\in\mathcal{M}_{g,n}\mid \exists q\neq 0\in I_2(C, K_C-\sum_{j=1}^n p_j) , \textnormal{ rk}(q)\leq k \right\rbrace,
\end{equation*}
where we denote by $I_2\left(C, K_C-\sum_{j=1}^n p_j\right)$ the kernel of the map
\begin{equation*}
\sym^2 H^0\left(K_C-\sum_{j=1}^n p_j\right) \to H^0\left(K_C^{\otimes 2}-2\sum_{j=1}^n p_j\right).
\end{equation*}
It follows from Theorem \ref{main thm} that this locus is a proper closed subset of $\mathcal{M}_{g,n}$. In Theorem \ref{class} we compute the class of its closure in $\overline{\mathcal{M}}_{g,n}$. To state the theorem, we recall the definition of the divisor classes that generate the Picard group of $\overline{\mathcal{M}}_{g,n}$. We denote the first Chern class of the Hodge bundle by $\lambda$ and $\psi_j$ stands for the first Chern class of the pullback of the relative dualizing sheaf via the section $\sigma_j: \overline{\mathcal{M}}_{g,n} \to \overline{\mathcal{M}}_{g,n+1}$ corresponding to the marked point labelled with $j$. The class of the irreducible singular curves with a non-separating node is denoted by $\delta_{irr}$ and $\delta_{i:S}$ is the class of the locus of curves whose general element is a reducible curve consisting of two components of genus $g - i$ and $i$, where the points labelled by $S\subseteq \{1,\dots ,n\}$ lie on the genus $i$ component.
\begin{thm}\label{class}
The class of the divisor $\overline{\mathfrak{Quad}}^k_{g,n}$ is given by the following formula:
\begin{equation*}
\overline{\mathfrak{Quad}}^k_{g,n} =\alpha^k_{g,n}\cdot\left(a\cdot\lambda+ c\cdot\sum_{j=1}^{n}\psi_j-b_{irr}\cdot \delta_{irr}-\sum_{i,s\geq 0}b_{i:s}\cdot\sum_{|S|=s}\delta_{i:S}\right),
\end{equation*}
where
\begin{equation*}
\alpha^k_{g,n}=\prod_{t=0}^{g-n-k-1}\frac{{g-n+t\choose g-n-k-t}}{{2t+1\choose t}},\quad a = \frac{7 g-9 n+6}{g-n},\quad c = \frac{g+n-6}{g-n},\quad b_{irr} = 1,
\end{equation*}
and all other coefficients are $\geq 1$. For $k=4$, we can further compute that
\begin{equation*}
b_{0:s} =\frac{s (gs-3s+n-3)}{g-n}.
\end{equation*}
\end{thm}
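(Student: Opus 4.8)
The plan is to exhibit $\overline{\mathfrak{Quad}}^k_{g,n}$ as the pushforward of a symmetric determinantal locus and to extract its class from a symmetric Thom--Porteous formula. I would work over the partial compactification $\widetilde{\mathcal{M}}_{g,n}=\mathcal{M}_{g,n}\cup\Delta^0_{irr}\cup\bigcup\Delta^0_{i:S}$ obtained by adjoining the generic points of the boundary divisors; its complement in $\overline{\mathcal{M}}_{g,n}$ has codimension $\geq 2$, so a class computed there determines the coefficients of $\lambda,\psi_j,\delta_{irr},\delta_{i:S}$. Let $\pi\colon\mathcal{C}\to\widetilde{\mathcal{M}}_{g,n}$ be the universal curve with sections $\sigma_1,\dots,\sigma_n$, put $\mathcal{L}=\omega_\pi(-\sum_j\sigma_j)$, and form $\mathcal{E}=\pi_*\mathcal{L}$ of rank $m=g-n$ and $\mathcal{F}=\pi_*\mathcal{L}^{\otimes 2}$. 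The multiplication map $\sym^2\mathcal{E}\to\mathcal{F}$ has maximal rank on a general fibre by the quadratic maximal rank theorem (\cite{BF},\cite{JP}), so its kernel $\mathcal{I}_2\subseteq\sym^2\mathcal{E}$ is a vector bundle. On $\rho\colon P=\mathbb{P}(\mathcal{I}_2)\to\widetilde{\mathcal{M}}_{g,n}$ the tautological quadric is a symmetric morphism $\mathcal{E}^\vee\to\mathcal{E}\otimes\mathcal{O}_P(1)$, and $\mathfrak{Quad}^k_{g,n}=\rho(D_k)$, where $D_k\subseteq P$ is the locus on which this morphism has rank $\leq k$.

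By Theorem \ref{main thm} (with $q(g,g-n-1,2g-2-n,k)=-1$) the locus $D_k$ has codimension $\binom{r-k+2}{2}$ in $P$, one more than the fibre dimension of $\rho$, so that $\rho_*[D_k]$ is a divisor; checking that $\rho|_{D_k}$ is generically injective gives $\rho_*[D_k]=[\overline{\mathfrak{Quad}}^k_{g,n}]$. I would compute $[D_k]$ with the symmetric degeneracy formula of Harris--Tu (equivalently J\'ozefiak--Lascoux--Pragacz). Writing $\zeta=c_1(\mathcal{O}_P(1))$, its leading term is $\alpha^k_{g,n}\,\zeta^{\binom{r-k+2}{2}}$, where $\alpha^k_{g,n}$ is the degree of the variety of rank $\leq k$ quadrics in $\mathbb{P}^{\binom{m+1}{2}-1}$, namely the Harris--Tu product $\prod_{t=0}^{m-k-1}\binom{m+t}{m-k-t}/\binom{2t+1}{t}$ with $m=g-n$ --- exactly the asserted $\alpha^k_{g,n}$. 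Pushing forward with $\rho_*\zeta^{\binom{r-k+2}{2}-1}=1$ and $\rho_*\zeta^{\binom{r-k+2}{2}}=s_1(\mathcal{I}_2)$ isolates the contribution $\alpha^k_{g,n}\,s_1(\mathcal{I}_2)$ together with the codimension-one part of the subleading Harris--Tu term; since both carry $\alpha^k_{g,n}$ as a factor, the whole class takes the asserted form $\alpha^k_{g,n}\cdot(\,\cdots\,)$.

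It remains to evaluate the resulting $c_1$'s. I would compute $c_1(\mathcal{E})$ and $c_1(\mathcal{F})$, hence $s_1(\mathcal{I}_2)=-c_1(\sym^2\mathcal{E})+c_1(\mathcal{F})$, by Grothendieck--Riemann--Roch for $\pi$ and $\mathcal{L}$, using $\pi_*\big(c_1(\omega_\pi)^2\big)=12\lambda-\delta$, $\sigma_j^*\omega_\pi=\psi_j$, and the self-intersection formula $\sigma_j^*[\sigma_j]=-\psi_j$ on the interior. Together with the rank-dependent Harris--Tu coefficients this yields the interior coefficients $a=\tfrac{7g-9n+6}{g-n}$ and $c=\tfrac{g+n-6}{g-n}$, the common denominator $g-n=\rk\mathcal{E}$ being produced by the Schubert-calculus pushforward from the Grassmann-bundle resolution of $D_k$ that underlies the Harris--Tu formula.

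The main obstacle is the boundary. The coefficients $b_{irr}$ and $b_{i:S}$ cannot be read off from $c_1$ alone: one must control how $\mathcal{E}$, the multiplication map, and the cycle $D_k$ degenerate over the generic points of $\delta_{irr}$ and $\delta_{i:S}$, typically through a limit-linear-series description of $\mathcal{L}$ on a nodal curve and a local analysis of the rank condition there. A transversality computation should give $b_{irr}=1$, and the effectivity of the extended degeneracy locus over each stratum should give $b_{i:s}\geq 1$. Pinning down $b_{0:s}$ exactly for general $k$ is genuinely hard; for $k=4$, however, rank-$4$ quadrics correspond to pencils on the curve (as exploited by Andreotti--Mayer \cite{AM} and Zamora \cite{Za}), which linearizes the geometry along the boundary and should allow a direct test-curve or limit-linear-series computation yielding $b_{0:s}=\tfrac{s(gs-3s+n-3)}{g-n}$.
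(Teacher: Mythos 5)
Your interior computation is essentially the paper's: the paper also realizes $\mathfrak{Quad}^k_{g,n}$ as the quadric rank locus $\Sigma^k_{e,f}(\phi)$ of the multiplication map $\phi:\sym^2\mathcal{E}\to\mathcal{F}$ with $\mathcal{E}=\pi_*\mathscr{L}(-\sum_j\delta_{0:\{j,n+1\}})$, $\mathcal{F}=\pi_*\mathscr{L}^{\otimes 2}(-2\sum_j\delta_{0:\{j,n+1\}})$, and gets $a$, $c$ and the prefactor $\alpha^k_{g,n}$ from $c_1(\mathcal{E})=\lambda-\sum\psi_j$, $c_1(\mathcal{F})=13\lambda-5\sum\psi_j-\delta$ (GRR) plugged into the Farkas--Rim\'anyi class formula $[\Sigma^k_{e,f}(\phi)]=A^k_e(c_1(\mathcal{F})-\tfrac{2f}{e}c_1(\mathcal{E}))$, which it cites as a black box rather than rederiving via $\mathbb{P}(\ker\phi)$ and Harris--Tu as you propose. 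Two cautions about your rederivation: $\mathcal{I}_2=\ker\phi$ is a vector bundle only where $\phi$ is surjective, and $\pi_*\mathscr{L}$ itself fails to have constant rank over the strata $\Delta_{i:S}$ with $i<s$ or $g-i<n-s$; the paper deals with this by twisting $\mathscr{L}$ by boundary divisors to get a bundle map $\phi'$ in codimension two. Also, generic injectivity of $\rho|_{D_k}$ is exactly the kind of multiplicity statement that needs an argument (over a general point of the divisor the fibre is $Q_k(C,\ell)$, a priori a finite set of quadrics, not one).

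The genuine gap is the boundary, which is most of the content of the statement and which you explicitly defer. Concretely: (i) $b_{irr}=1$ requires, beyond the $\delta$-coefficient $1$ in the virtual class, the geometric input $\Delta_{irr}\not\subset\Sigma^k_{e,f}(\phi)$, which the paper obtains by rerunning the arguments of Theorem \ref{main thm} for irreducible nodal canonical curves. (ii) For $i<s$ the bound $b_{i:s}\ge 1$ comes from the identity $[\Sigma^k_{e,f}(\phi')]=\overline{\mathfrak{Quad}}^k_{g,n}+\sum d_{i:s}\delta_{i:S}$ with $d_{i:s}\ge 0$ together with an explicit test-curve evaluation of $c_1(\mathcal{E}')$, $c_1(\mathcal{F}')$ along $\Delta_{i:S}$ giving the closed form $\tilde b_{i:s}$ in equation (\ref{tilde b_i:s}); ``effectivity of the extended degeneracy locus'' alone does not produce these numbers. (iii) For $k=4$ the exact value of $b_{0:s}$ is not obtained by a test-curve computation of the divisor itself, but by proving that the correction term vanishes, $d_{0:s}=0$, i.e.\ that the \emph{general} point of $\Delta_{0:S}$ does not lie in $\Sigma^k_{e,f}(\phi')$. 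The paper's mechanism is the one you gesture at (rank-$4$ quadrics correspond to pencil pairs $(A,K_C-A-n p_0)$ after specializing all marked points on the rational tail to a single $p_0\in C$), but the decisive step is a ramification estimate: $h^0(K_C-A-np_0)\ge 2$ forces $\alpha^{K_C-A}(p_0)\ge(0,\dots,0,a-n-2,a-n-2)$, making the adjusted Brill--Noether number at the general point $p_0$ negative. Without this (or an equivalent) argument the formula for $b_{0:s}$ is not established.
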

The number $\alpha^k_{g,n}$ is the degree of the variety of quadrics of rank at most $k$ inside the variety of all quadrics in $\mathbb{P}^{g-n-1}$ (see \cite{HT}). If we specialize to the case of smooth quadrics (i.e. $k= g-n$) then $\alpha^k_{g,n} = 1$ and we recover our formula for the divisor class in \cite{Ka}. 

We also use a different construction to obtain a divisor class in $\overline{\mathcal{M}}_{15,8}$. Since, as pointed out earlier, the quadratic case of the maximal rank conjecture holds, for a general element $[C,p_1,\dots ,p_8]\in\mathcal{M}_{15,8}$ one has that
\begin{equation*}
\dim I_2(C, K_C-\sum_{j=1}^8 p_j) = 2.
\end{equation*}
Adopting the terminology in \cite{FR}, we call this pencil \emph{degenerate} if its intersection with the variety of singular quadrics is non reduced. In Theorem \ref{Divisor on M15,8} we show that this condition singles out a divisor in $\mathcal{M}_{15,8}$ and compute the class of its closure:
\begin{thm}\label{Divisor on M15,8}
The locus of pointed curves defined as
\begin{equation*}
\mathfrak{D}_{15,8}\defeq \left\lbrace [C,p_1,\dots,p_{8}]\in \mathcal{M}_{15,8}\mid I_2(C, K_C-\sum_{j=1}^8 p_j) \textnormal{ is a degenerate pencil}\right\rbrace
\end{equation*}
is a divisor and the class of its closure is given by the following formula
\begin{equation*}
\overline{\mathfrak{D}}_{15,8}= 6\cdot\left(39\cdot\lambda + 17\cdot \psi -b_{irr}\cdot \delta_{irr}-\sum_{i,s\geq 0}b_{i:s}\cdot\sum_{|S|=s}\delta_{i:S}\right),
\end{equation*}
where $b_{irr}, b_{i:s}\geq 7$ for all $i,s\geq 0$.
\end{thm}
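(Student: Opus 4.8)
The plan is to realize $\overline{\mathfrak D}_{15,8}$ as the zero locus of a discriminant section on a $\mathbb P^1$-bundle and then translate back into tautological classes. Over an open set $B\subseteq\overline{\mathcal M}_{15,8}$ whose complement has codimension at least two, let $\pi:\mathcal C\to B$ be the universal curve with sections $\sigma_1,\dots,\sigma_8$ and set $\mathcal L=\omega_\pi(-\sum_j\sigma_j)$, so that fiberwise $\mathcal L=K_C-\sum p_j$ embeds $C$ as a degree-$20$ curve in $\mathbb P^6$. Put $\mathbb E=\pi_*\mathcal L$ (rank $7$) and $\mathbb F=\pi_*\mathcal L^{\otimes 2}$ (rank $26$). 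Since the quadratic maximal rank conjecture holds, the multiplication map $\phi:\sym^2\mathbb E\to\mathbb F$ is generically surjective, so $\mathcal I_2=\ker\phi$ is a rank-$2$ bundle realizing the pencil of quadrics, and I would work on the $\mathbb P^1$-bundle $\rho:P=\mathbb P(\mathcal I_2)\to B$, which sits inside $\mathbb P(\sym^2\mathbb E)$.

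Next I would build the discriminant section. The tautological inclusion $\mathcal O_P(-1)\hookrightarrow\rho^*\sym^2\mathbb E$ gives a universal symmetric map $\tilde q:\rho^*\mathbb E^\vee\to\rho^*\mathbb E\otimes\mathcal O_P(1)$, whose determinant is a section $\det\tilde q\in H^0\big(P,\mathcal O_P(7)\otimes\rho^*(\det\mathbb E)^{\otimes 2}\big)$ cutting out the relative locus of singular quadrics, finite of degree $7$ over $B$. On each fiber this section is a binary septic, and by definition $[C,\vec p]\in\mathfrak D_{15,8}$ exactly when this septic has a repeated root, i.e. its discriminant vanishes. The discriminant of a binary form of degree $m$ has degree $2m-2$ and transforms with weight $m(m-1)$ under the variable substitution; for $m=7$ this is degree $12$ and weight $42$. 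Globalizing, $\mathrm{disc}(\det\tilde q)$ is a section of $(\det\mathcal I_2)^{\vee\otimes 42}\otimes(\det\mathbb E)^{\otimes 24}$ on $B$, so that
\[
[\overline{\mathfrak D}_{15,8}]=-42\,c_1(\mathcal I_2)+24\,c_1(\mathbb E).
\]
To see that this really is a divisor (the section is not identically zero) I would exhibit a single pointed curve whose pencil meets the discriminant in $7$ distinct points, e.g. by degeneration.

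Then I would reduce everything to $c_1(\mathbb E)$ and $c_1(\mathbb F)$. From $0\to\mathcal I_2\to\sym^2\mathbb E\to\mathbb F\to 0$ and $c_1(\sym^2\mathbb E)=8\,c_1(\mathbb E)$ one gets $c_1(\mathcal I_2)=8\,c_1(\mathbb E)-c_1(\mathbb F)$, hence
\[
[\overline{\mathfrak D}_{15,8}]=-312\,c_1(\mathbb E)+42\,c_1(\mathbb F).
\]
A Grothendieck--Riemann--Roch computation (following Mumford) gives, on the interior, $c_1(\mathbb E)=\lambda-\psi$ and $c_1(\mathbb F)=13\lambda-5\psi$ with $\psi=\sum_j\psi_j$ (using that $R^1\pi_*\mathcal L$ is trivial, since the unique section of $\sum_j p_j$ is constant, and that $\mathcal L^{\otimes 2}$ has no higher cohomology). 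Substituting yields $234\lambda+102\psi=6(39\lambda+17\psi)$, which matches the asserted $\lambda$ and $\psi$ coefficients.

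The hard part is the boundary. The interior computation sees $\delta=0$ and hence pins down only the $\lambda$ and $\psi$ coefficients; the bundle $\mathcal I_2$ and the map $\phi$ degenerate along the boundary, so the exact $\delta$-coefficients would require controlling how the seven singular quadrics in the pencil collide as $C$ acquires a node, which is delicate. I would therefore aim only for the lower bounds $b_{irr},b_{i:s}\ge 7$ claimed in the theorem, obtained by restricting $\overline{\mathfrak D}_{15,8}$ to test curves meeting each boundary divisor and using that $\overline{\mathfrak D}_{15,8}$ is effective and does not contain the generic point of the relevant boundary stratum, together with Mumford's relation $\kappa_1=12\lambda-\delta$. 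This degeneration analysis, rather than the tautological bookkeeping, is where I expect the main difficulty to lie.
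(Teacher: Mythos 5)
Your computation of the $\lambda$ and $\psi$ coefficients is correct, and it is a genuinely different route from the paper's: the paper simply quotes the Farkas--Rim\'anyi formula for the class of the degenerate-pencil locus (Theorem \ref{Degenerate pencil class}), whereas you re-derive that formula in the case $e=7$ from first principles, as the pushforward of the discriminant of the binary septic $\det\tilde q$ on $\mathbb{P}(\mathcal{I}_2)$. Indeed your expression $-e(e-1)c_1(\mathcal{I}_2)+4(e-1)c_1(\mathcal{E})$ combined with $c_1(\mathcal{I}_2)=(e+1)c_1(\mathcal{E})-c_1(\mathcal{F})$ reproduces $(e-1)\bigl(e\,c_1(\mathcal{F})-(e^2+e-4)c_1(\mathcal{E})\bigr)$ exactly, and your GRR values $c_1(\mathcal{E})=\lambda-\psi$, $c_1(\mathcal{F})=13\lambda-5\psi$ on the interior match the paper's. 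This self-contained derivation is a real gain in transparency. However, two parts of the theorem are left as genuine gaps. First, the statement that $\mathfrak{D}_{15,8}$ is a \emph{proper} subvariety (equivalently, that your discriminant section is not identically zero) is precisely where the paper invests most of its effort: it constructs an explicit smooth curve of genus $15$ and degree $20$ on $\textnormal{Bl}_{15}(\mathbb{P}^2)\subseteq\mathbb{P}^6$ and verifies by Macaulay that $I_2(X,\mathcal{O}_X(2))=I_2(C,\mathcal{O}_C(2))$ is a non-degenerate pencil. Saying you ``would exhibit a single pointed curve, e.g.\ by degeneration'' does not discharge this; without it the locus could a priori be all of $\mathcal{M}_{15,8}$ and the class formula would be vacuous.

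Second, the bounds $b_{irr},b_{i:s}\geq 7$ are not obtainable by the test-curve strategy you sketch without doing the very boundary analysis you set aside. The paper's mechanism is different and more efficient: it extends $\phi$ to a morphism of vector bundles over the partial compactification $\widetilde{\mathcal{M}}_{g,n}$ (and, for the strata with $i<s$, over a further extension using the twisted line bundle $\mathscr{L}'$), where GRR gives $c_1(\mathcal{F})=13\lambda-5\psi-\delta$, so that the virtual class $[\mathfrak{Dp}(\phi)]=6(39\lambda+17\psi-7\delta)$ already carries the coefficient $-7$ on every boundary divisor; since $[\mathfrak{Dp}(\phi)]$ differs from $\overline{\mathfrak{D}}_{15,8}$ by an \emph{effective} combination of boundary divisors, the bounds $b_{irr},b_{i:s}\geq 7$ follow at once (with a separate check of $\tilde b_{i:s}=-2i^2+i(9-10s)+s(12s+5)\geq 7$ for $i<s$). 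Note that your test-curve idea would additionally require showing that $\overline{\mathfrak{D}}_{15,8}$ does not contain the generic point of each boundary stratum, which is itself unproved in your sketch. To complete your argument you should replace the test-curve plan by the extension-over-the-boundary computation, and supply the explicit non-degenerate example.
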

Using the pullback of this divisor to $\overline{\mathcal{M}}_{15,9}$, we obtain a new result on the birational geometry of $\overline{\mathcal{M}}_{15,9}$:
\begin{cor}\label{M15,9 is of general type}
The moduli space $\overline{\mathcal{M}}_{15,9}$ is of general type.
\end{cor}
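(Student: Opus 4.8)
The plan is to reduce the statement to the bigness of the canonical class and then invoke the extension of pluricanonical forms. Since $g=15\geq 4$, the coarse space $\overline{\mathcal{M}}_{15,9}$ has canonical singularities in the strong sense that every pluricanonical form on its smooth locus extends to an arbitrary desingularization; this is the Harris--Mumford analysis of the singularities of $\overline{\mathcal{M}}_g$ together with its extension to the pointed case. Granting this, it suffices to show that $K_{\overline{\mathcal{M}}_{15,9}}$ lies in the interior of the pseudoeffective cone, i.e. that it is big, and general type follows at once.

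First I would record the adjunction expression
\begin{equation*}
K_{\overline{\mathcal{M}}_{15,9}} = 13\lambda + \sum_{j=1}^{9}\psi_j - 2\delta_{irr} - \sum_{i,S} c_{i:S}\,\delta_{i:S},
\end{equation*}
with the boundary coefficients $c_{i:S}\in\{2,3\}$ prescribed by the canonical bundle formula for $\overline{\mathcal{M}}_{g,n}$. Next, using the forgetful morphism $\pi\colon\overline{\mathcal{M}}_{15,9}\to\overline{\mathcal{M}}_{15,8}$ that drops the ninth marked point, I would pull back the divisor of Theorem \ref{Divisor on M15,8} via the relations $\pi^*\lambda=\lambda$, $\pi^*\psi_j=\psi_j-\delta_{0:\{j,9\}}$, $\pi^*\delta_{irr}=\delta_{irr}$ and $\pi^*\delta_{i:S}=\delta_{i:S}+\delta_{i:S\cup\{9\}}$. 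This produces an effective divisor $\pi^*\overline{\mathfrak{D}}_{15,8}$ on $\overline{\mathcal{M}}_{15,9}$ all of whose boundary coefficients are $\geq 6\cdot 7=42$, by the bound $b_{irr},b_{i:s}\geq 7$.

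The core of the argument is then to choose a rational $t>0$ and write
\begin{equation*}
K_{\overline{\mathcal{M}}_{15,9}} = t\cdot\pi^*\overline{\mathfrak{D}}_{15,8} + \pi^*\!\left(K_{\overline{\mathcal{M}}_{15,8}} - t\,\overline{\mathfrak{D}}_{15,8}\right) + \omega_\pi,
\end{equation*}
where $\omega_\pi = \psi_9 - \sum_{j=1}^{8}\delta_{0:\{j,9\}}$ is the relative dualizing class of the forgetful map, relatively ample of fibre degree $2g-2=28$. I would fix $t$ so that, after the large boundary coefficients of $\overline{\mathfrak{D}}_{15,8}$ absorb the negative boundary contributions $-2\delta_{irr}-\sum c_{i:S}\delta_{i:S}$ of the canonical class, the part coming from $\lambda$ and the $\psi_j$ survives with non-negative coefficients; exhibiting the resulting remainder as a big class (a positive combination of the nef classes $\lambda,\psi_j$ and a small multiple of an ample class, plus effective boundary and the relatively positive $\omega_\pi$) shows that $K_{\overline{\mathcal{M}}_{15,9}}$ is big, since adding the effective divisor $t\cdot\pi^*\overline{\mathfrak{D}}_{15,8}$ preserves bigness.

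The main obstacle is precisely this numerical balancing. Because $\overline{\mathfrak{D}}_{15,8}$ carries a comparatively large $\psi$-coefficient ($6\cdot 17 = 102$), the value of $t$ needed to dominate the canonical boundary would force the $\psi$-coefficients negative if the computation were run on $\overline{\mathcal{M}}_{15,8}$ itself; this is exactly why one passes to $\overline{\mathcal{M}}_{15,9}$, where the free class $\psi_9$ (canonical coefficient $1$, absent from $\pi^*\overline{\mathfrak{D}}_{15,8}$) and the relative ampleness of $\omega_\pi$ supply the missing positivity in the fibre direction and close the gap. Checking that a single $t$ simultaneously satisfies all the coefficient inequalities --- for $\delta_{irr}$, for each boundary type $\delta_{i:S}$, and for $\lambda$ and the $\psi_j$ --- is the delicate step, and it is where the explicit bounds $b_{irr},b_{i:s}\geq 7$ of Theorem \ref{Divisor on M15,8} enter in an essential way.
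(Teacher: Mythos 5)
Your overall framing---reduce to bigness of $K_{\overline{\mathcal{M}}_{15,9}}$ via the Harris--Mumford analysis of singularities, then decompose $K$ as an effective class plus a big class---is the standard one and is also what the paper does implicitly. Your pullback formulas are correct. But the ``delicate numerical balancing'' that you defer to the end is not a verification you postponed; it genuinely fails for the decomposition you propose, for two separate reasons. First, using a single forgetful map $\pi=\pi_9$, the class $\pi^*\overline{\mathfrak{D}}_{15,8}$ carries coefficient $6\cdot 17=102$ on each of $\psi_1,\dots,\psi_8$ and $0$ on $\psi_9$. Covering the $-2\,\delta_{irr}$ term of $K$ using only the bound $b_{irr}\geq 7$ forces $t\geq 2/42$ (and $-3\,\delta_{1:S}$ forces $t\geq 3/42$, already incompatible with keeping the $\lambda$-coefficient $13-234t$ nonnegative), so the remainder $K-t\,\pi^*\overline{\mathfrak{D}}_{15,8}$ has coefficient $1-102t<-3$ on each $\psi_j$, $j\leq 8$. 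These are \emph{base} directions for $\pi$, so the relative positivity of $\omega_\pi=\psi_9-\sum_j\delta_{0:\{j,9\}}$, which lives only in the one-dimensional fibre direction, cannot rescue this: a class of the form $\pi^*N+c\,\omega_\pi$ with $N$ far from pseudoeffective on the $50$-dimensional base is not big (compare $\mathcal{O}(-5,1)$ on $\mathbb{P}^1\times\mathbb{P}^1$). The paper removes the asymmetry by averaging the pullbacks over all nine forgetful maps, obtaining the effective class $\mathcal{Z}_{15,9}=351\lambda+136\sum_j\psi_j-b_{irr}\delta_{irr}-\cdots$ with all boundary coefficients $\geq 63$.

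Second---and this is the essential missing ingredient---even the symmetrized class does not suffice on its own: keeping the $\lambda$-coefficient nonnegative forces $t\leq 13/351$, while covering $-2\,\delta_{irr}$ forces $t\geq 2/63$, and then $1-136t<0$ again. The paper therefore introduces a second effective divisor that your proposal never mentions: the pullback of the Brill--Noether divisor $BN_{15}=54\lambda-8\delta_{irr}-\cdots$ from $\overline{\mathcal{M}}_{15}$, which has small slope $54/8$ and no $\psi$-part, and hence absorbs most of the boundary deficit without consuming the $\psi$-budget. This yields
\begin{equation*}
K_{\overline{\mathcal{M}}_{15,9}}=\tfrac{25}{297}\sum_{j=1}^9\psi_j+\tfrac{2}{297}\,\mathcal{Z}_{15,9}+\tfrac{13}{66}\,BN_{15}+E,
\end{equation*}
with $E$ effective and boundary-supported, and the conclusion follows from the bigness of $\sum_j\psi_j$ (not from ampleness of $\omega_\pi$). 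Without $BN_{15}$, or some substitute effective class of small slope carrying no $\psi$-classes, no choice of $t$ satisfies your system of coefficient inequalities, so the gap is a missing idea rather than an unchecked computation.
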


This work is part of my PhD thesis. I am grateful to my advisor Gavril
Farkas, who brought his recent result \cite{FR} to my attention and thus made the class computations in Theorem \ref{class} and \ref{Divisor on M15,8} possible. My special thanks also go to my coadvisor Angela Ortega for fruitful discussions and to my colleague Daniele Agostini, who helped me with the Macaulay computations in the proof of Theorem \ref{Divisor on M15,8}. I have been supported by Berlin Mathematical School during the preparation of this work.

\subsection*{Notation}

In what follows, we will denote by $Q_k(C,\mathbb{P}^r)$ the variety of quadrics of rank at most $k$ containing $C$, if the embedding $C\hookrightarrow\mathbb{P}^r$ is clear from the context. We will write $Q_k(C,L)$ when the $g^r_d$ is complete, i.e $g^r_d=(L,V)$ where $V=H^0(C,L)$. Finally, $Q_k(\mathbb{P}^r)$ will stand for the variety of quadrics having rank at most $k$ in $\mathbb{P}^r$.

\section{Proof of Theorem \ref{main thm}}

It is well known that the Hilbert scheme of curves of genus $g$ and degree $d$ in $\mathbb{P}^r$ has a unique component that parametrizes curves with general moduli, when $\rho(g,r,d)\geq 0$. We let $\mathcal{H}_{d,g,r}$ denote this component. For a projective curve $C\subseteq \mathbb{P}^r$  we define the variety $Q_k(C,\mathbb{P}^r)$ as
\begin{equation*}
Q_k(C,\mathbb{P}^r)\defeq \mathbb{P}\left(I_2\left(C, \mathcal{O}_C(2)\right)\right)\cap Q_k(\mathbb{P}^r)\subseteq |\mathcal{O}_{\mathbb{P}^r}(2)|.
\end{equation*}
Since it is the intersection of two projective varieties in the projective space, all its irreducible components have dimension at least $q(g,r,d,k)$. Therefore Theorem \ref{main thm} will follow once we show that
\begin{equation*}
\dim Q_k(C,g^r_d)\leq q(g,r,d,k)
\end{equation*}
for a general element $[C\subseteq \mathbb{P}^r]\in\mathcal{H}_{d,g,r}$. We show this inductively using nodal curves that are defined as the union of an element $[C'\subseteq \mathbb{P}^r]\in \mathcal{H}_{d-1,g-1,r}$, which satisfies Theorem \ref{main thm}, and a general secant line of $C'$. For a given $r$, the base steps of this inductive argument are the case of rational normal curves when $g-d+r=0$ and the case of canonical curves when $g-d+r=1$, since $\rho(g,r,d)=0$ implies that $g=r+1$ and $d = 2r$. In the following lemmas, we confirm Theorem \ref{main thm} for these two base cases.

\begin{lem}\label{Canonical Curves}
Let $C$ be a general curve of genus $g$. Then $Q_k(C,K_C)$ is of pure dimension $q(g,g-1,2g-2,k)$ for all $k\geq 3$.
\end{lem}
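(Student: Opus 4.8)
The plan is to prove the upper bound $\dim Q_k(C,K_C)\le q(g,g-1,2g-2,k)$; the reverse inequality is automatic. Indeed $Q_k(C,K_C)=\mathbb{P}(I_2(C,K_C))\cap Q_k(\mathbb{P}^{g-1})$ is the intersection of a linear subspace with the rank locus $Q_k(\mathbb{P}^{g-1})\subseteq|\mathcal{O}_{\mathbb{P}^{g-1}}(2)|$, which has codimension $\binom{g-k+1}{2}$, so every irreducible component has dimension $\ge q$. Hence, once the upper bound is established, $Q_k(C,K_C)$ is automatically of pure dimension $q$, and it is nonempty exactly when $q\ge 0$ (a linear space always meets a projective variety of complementary dimension). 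I would prove the upper bound by induction on $k$, filtering $Q_k(C,K_C)$ by the exact rank of the quadric, with base case $k=2$: a quadric of rank $\le 2$ is degenerate and cannot contain the nondegenerate curve $C$, so $Q_2(C,K_C)=\emptyset$.

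For the inductive step I use the tangent space of the rank locus. It is classical that $Q_k(\mathbb{P}^{g-1})$ is smooth along its rank-exactly-$k$ stratum, with tangent space at such a $Q$ equal to the space of quadrics whose restriction to the vertex $\mathbb{P}(\ker Q)$ vanishes, where $\ker Q\subseteq H^0(K_C)^{\vee}$ is the $(g-k)$-dimensional radical. Writing $U\defeq\mathrm{Ann}(\ker Q)\subseteq H^0(K_C)$ for the $k$-dimensional space of hyperplanes through the vertex (equivalently, the sub-series cutting the projection of $C$ from the vertex), this tangent space is the kernel of the restriction $\sym^2 H^0(K_C)\to\sym^2\!\big(H^0(K_C)/U\big)$, namely the degree-two piece $U\cdot H^0(K_C)$. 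Thus $\mathbb{P}(I_2(C,K_C))$ meets $Q_k(\mathbb{P}^{g-1})$ transversally at $Q$ iff $I_2(C,K_C)+U\cdot H^0(K_C)=\sym^2 H^0(K_C)$; using the defining sequence of $I_2$ together with Noether's surjectivity of $\mu_2$, this is in turn equivalent to surjectivity of the multiplication map
\[
\mu_U\colon U\otimes H^0(C,K_C)\longrightarrow H^0(C,K_C^{\otimes 2}).
\]

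Granting surjectivity of $\mu_U$ for every relevant $U$, the induction closes. By the inductive hypothesis $\dim Q_{k-1}(C,K_C)\le q(g,g-1,2g-2,k-1)=q-(g-k+1)<q$, so no component of $Q_k(C,K_C)$ can be contained in $Q_{k-1}(\mathbb{P}^{g-1})$, since such a component would have dimension $\ge q$. Hence every component meets the smooth, rank-exactly-$k$ stratum, and at its general point transversality forces the dimension to be exactly $q$; this yields the upper bound and, when $q<0$, emptiness. (The case $k=3$ is then recovered automatically: $q=-1$, so the existence of a rank-$3$ quadric would produce a transverse intersection point of negative expected dimension, which is absurd—this is the geometric counterpart of the statement that a general canonical curve, having maximal gonality, lies on no cone over a conic.)

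The main obstacle is precisely the surjectivity of $\mu_U$ for the special subspaces $U$ that arise as vertex-annihilators of low-rank quadrics through $C$ (which are base-point-free for a general canonical curve). I would approach it through the kernel (Lazarsfeld) bundle $M_U=\ker\!\big(U\otimes\mathcal{O}_C\to K_C\big)$: twisting the sequence by $K_C$ and chasing cohomology shows that $\mu_U$ is surjective if and only if $h^0(M_U^{\vee})=\dim U$, i.e. $M_U^{\vee}$ carries no sections beyond the tautological ones from $U^{\vee}$. For a general curve this minimal-cohomology property is governed by the semistability of $M_U$ and by Green--Lazarsfeld type vanishing; the delicate point is that it is needed not for a generic $U$ but for the special $U$ supporting quadrics. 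I therefore expect the crux to be a degeneration of $C$ (or of the pair $(C,U)$) to a curve on which $h^0(M_U^{\vee})$ is computable and minimal, after which upper semicontinuity of fibre dimension over $\mathcal{M}_g$ transports the bound to the general curve.
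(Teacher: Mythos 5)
Your framework—reduce to an upper bound, induct on the rank $k$, and use smoothness of $Q_k(\mathbb{P}^{g-1})$ along its rank-exactly-$k$ stratum to turn the dimension estimate into a transversality statement, namely surjectivity of $\mu_U\colon U\otimes H^0(K_C)\to H^0(K_C^{\otimes 2})$ for $U=\mathrm{Ann}(\ker Q)$—is correctly set up (the tangent-space identification, the use of Noether's theorem, and the inductive exclusion of components inside $Q_{k-1}$ are all fine). But the proof has a genuine gap at exactly the point where all the content lives: the surjectivity of $\mu_U$ for the \emph{special} $k$-dimensional subspaces $U$ arising as vertex-annihilators of low-rank quadrics through $C$ is asserted as the ``main obstacle'' and then only addressed with a research plan (``I would approach it through the kernel bundle\dots I expect the crux to be a degeneration\dots''). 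Nothing is actually proved. These $U$ are highly non-generic: for $k=4$ they are spanned by two sub-pencils $H^0(A_1+F_1)$, $H^0(A_2+F_2)$ with $K_C=A_1+A_2+F$, and semistability of $M_{K_C}$ or generic-vanishing statements for general subspaces say nothing about them. Even your $k=3$ step silently relies on this unproved claim (you need $\mu_U$ surjective for the $U$ of a hypothetical rank-$3$ quadric in order to derive the contradiction), so the argument does not close at any stage beyond $k=2$.

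For comparison, the paper avoids transversality entirely and works stratum by stratum with the classical geometry of low-rank quadrics. For $k=3$ it uses that a rank-$3$ quadric through the canonical curve forces a decomposition $K_C=2A+F$, whence the base-point-free pencil trick produces a nontrivial kernel of the Petri map $H^0(A+F)\otimes H^0(A)\to H^0(K_C)$, contradicting Gieseker--Petri. For $k=4$ it uses the Andreotti--Mayer correspondence $K_C=A_1+A_2+F$ and a Brill--Noether parameter count ($\dim W^1_a+\dim\mathrm{Gr}(2,g-a+1)=g-4$). For $k\geq 5$ it uses no transversality at all but rather homogeneity: an incidence-variety computation over the Hilbert scheme, projective normality of canonical curves, and the fact that all rank-$k$ quadrics are projectively equivalent, so the family of canonical curves on each has constant dimension $g^2-1$. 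If you want to salvage your route, you would need to actually prove surjectivity of $\mu_U$ for these special $U$; the paper's $k=3,4$ arguments suggest this is at least as hard as attacking the decompositions $K_C=A_1+A_2+F$ directly.
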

\begin{proof}
Let $C$ be a general curve of genus $g$. For $k=3$, the expected dimension of $Q_k(C,K_C)$ is equal to
\begin{equation}\label{rank3 -1}
q(g,g-1,2g-2,3)=-1.
\end{equation}
Therefore we need to show that there are no rank 3 quadrics containing the canonical model of $C$. A rank 3 quadric in $\mathbb{P}^r$ is ruled by a pencil of $r-2$ planes, where the base locus of the pencil is the singular locus of the quadric. It is not hard to see that every element of this pencil (considered with multiplicity two) is a hyperplane section of the quadric (In fact these hyperplanes are the tangent hyperplanes of the quadric). Therefore if a rank 3 quadric $Q$ contains $C$ then the pencil of $r-2$ planes of $Q$ cuts out a pencil $A$ on $C$ such that
\begin{equation*}
K_C = 2A+F,
\end{equation*}
where $F$ is a divisor supported in $C\cap \sing(Q)$. It follows from the base point free pencil trick (see \cite{ACGH}) that the Petri map
\begin{equation*}
\mu:H^0(A+F)\otimes H^0(A)\to H^0(K_C)
\end{equation*}
has at least one dimensional kernel. However, it is well known that such curves are special in moduli \cite{G}.

For $k=4$, we need to show that 
\begin{equation*}
\dim Q_4(C,K_C)= q(g,g-1,2g-2,4)=g-4.
\end{equation*}
A rank 4 quadric in $\mathbb{P}^r$ has two distinct pencils of $r-2$ planes, both of which sweep out the quadric. Similar to the rank 3 case, the base loci of these pencils are equal to the singular locus of the quadric. Moreover, the union of two $r-2$ planes belonging to two different pencils is a (tangent) hyperplane section of the quadric. Therefore if $Q$ has rank 4 and contains $C$ then the two pencils of $r-2$ planes cut out two pencils $A_1,A_2$ on $C$ such that
\begin{equation*}
K_C = A_1+A_2+F,
\end{equation*}
where, as before, $F$ is a divisor supported on $C\cap \sing(Q)$. Using this correspondence we can estimate the dimension of $Q_4(C,K_C)$ as follows:

To give an element of $Q_4(C,K_C)$, one has to specify a pencil $A$ of degree $a$ and a 2-dimensional space of sections of $H^0(K_C-A)$. Since $C$ is general, we can use the Brill-Noether theorem to count the parameters that these choices depend on:
\begin{equation*}
\dim Q_4(C,K_C) = \dim W^1_a(C)+\dim \textnormal{Gr}(2,g-a+1)= g-4.
\end{equation*}
That finishes the proof for $k=4$.

To deal with the case $k\geq 5$, we let $\widetilde{\mathcal{H}}_{g}$ be the locus of curves in $\mathcal{H}_{g,g-1,2g-2}$, for which we have that $\dim Q_4(C,K_C)=g-4$. We define the incidence variety 
\begin{equation*}
I_4\defeq\left\lbrace(Q,[C\subseteq \mathbb{P}^{g-1}])\mid C\subseteq Q\right\rbrace\subseteq Q_4(\mathbb{P}^{g-1})\times \widetilde{\mathcal{H}}_{g}.
\end{equation*}
Using the projection map $I_4\to \widetilde{\mathcal{H}}_{g}$, we compute that
\begin{equation*}
\dim I_4=3g-3+g^2-1+g-4=g^2+4g-8.
\end{equation*}
Since $\dim Q_4(\mathbb{P}^{g-1})=4g-7$, the dimension of the general fiber of the other projection map
\begin{equation*}
I_4\to Q_4(\mathbb{P}^{g-1})
\end{equation*}
is equal to $g^2 - 1$. Since all rank 4 quadrics are projectively equivalent, we conclude that they all contain $g^2 - 1$ dimensional family of canonical curves.

Next we consider the incidence variety
\begin{equation*}
I\defeq\left\lbrace(Q,[C\subseteq \mathbb{P}^{g-1}])\mid C\subseteq Q\right\rbrace\subseteq |\mathcal{O}_{\mathbb{P}^{g-1}}(2)|\times \widetilde{\mathcal{H}}_{g}.
\end{equation*}
Since canonical curves are projectively normal, $I$ is a projective bundle over $\widetilde{\mathcal{H}}_{g}$. Thus we obtain that $I$ is irreducible of dimension
\begin{equation*}
\dim I = \frac{1}{2} (3g^2 + g -4).
\end{equation*}
The projection map 
\begin{equation*}
I\to |\mathcal{O}_{\mathbb{P}^{g-1}}(2)|
\end{equation*}
is clearly dominant and thus has relative dimension $g^2 -1$ over an open set of $|\mathcal{O}_{\mathbb{P}^{g-1}}(2)|$. By the discussion in the preceeding paragraph, quadrics of rank 4 lie in this open set. Since for every $k\geq 5$ the variety $Q_k(\mathbb{P}^{g-1})$ contains $Q_4(\mathbb{P}^{g-1})$, we can find quadrics of arbitrary rank, which lie in this open set and hence contain a $g^2 -1$ dimensional family of canonical curves. By projective equivalence, this applies to \emph{all} quadrics of rank $k\geq 4$.

Finally we restrict ourselves to quadrics of rank at most $k$ and consider the incidence variety
\begin{equation*}
I_k\defeq\left\lbrace(Q,[C\subseteq \mathbb{P}^{g-1}])\mid C\subseteq Q\right\rbrace\subseteq Q_k(\mathbb{P}^{g-1})\times \widetilde{\mathcal{H}}_{g}.
\end{equation*}
By the above discussion, we know the relative dimension of the map $I_k\to Q_k(\mathbb{P}^{g-1})$. Using this, we compute
\begin{equation*}
\dim I_k = {g+1\choose 2}-{g+1-k\choose 2}+g^2-2.
\end{equation*}
Hence the dimension of the general fiber of $I_k\to \widetilde{\mathcal{H}}_{g}$ is equal to
\begin{equation*}
{g+1\choose 2}-{g+1-k\choose 2}-3g+2.
\end{equation*}
which is the same as $q(g,g-1,2g-2,k)$.
\end{proof}

\begin{lem}\label{RNC's}
For any $k\geq 3$ and any rational normal curve $\Gamma\subseteq\mathbb{P}^r$, the variety $Q_k(\Gamma,\mathbb{P}^r)$ has the expected dimension $q(0,r,r,k)$. 
\end{lem}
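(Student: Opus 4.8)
The plan is to follow the strategy of Lemma \ref{Canonical Curves}. As there, every component of $Q_k(\Gamma,\mathbb{P}^r)=\mathbb{P}(I_2(\Gamma,\mathcal{O}(2)))\cap Q_k(\mathbb{P}^r)$ has dimension at least $q(0,r,r,k)$, so it suffices to establish the reverse inequality $\dim Q_k(\Gamma,\mathbb{P}^r)\leq q(0,r,r,k)$. Two features of the rational normal curve make this cleaner than the canonical case: $\Gamma$ is projectively normal, and the family of rational normal curves in $\mathbb{P}^r$ is a single orbit under $\mathrm{PGL}_{r+1}$, of dimension $(r+1)^2-4$. I would first settle the rank $3$ case by hand and then bootstrap to all $k\geq 3$ by an incidence variety argument analogous to the treatment of $k\geq 5$ in Lemma \ref{Canonical Curves}.

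For the base case $k=3$ one has $q(0,r,r,3)=r-2$. A rank $3$ quadric can be written as $\ell_0\ell_1-\ell_2^2$ for independent linear forms $\ell_0,\ell_1,\ell_2$, and it contains $\Gamma$ precisely when the restrictions $f_i=\ell_i|_\Gamma\in H^0(\mathcal{O}_{\mathbb{P}^1}(r))$ satisfy $f_0f_1=f_2^2$. Since $\mathbb{C}[s,t]$ is a unique factorization domain, this forces $f_0=d\,u^2$, $f_1=d\,w^2$, $f_2=d\,uw$ for binary forms $u,w$ of some common degree $a$ and $d$ of degree $r-2a$. Thus a rank $3$ quadric through $\Gamma$ is determined by the pencil $\langle u,w\rangle\in\mathrm{Gr}(2,a+1)$ together with the divisor $\mathrm{div}(d)\in\mathbb{P}^{r-2a}$, and for each fixed $a$ this data varies in a family of dimension $2(a-1)+(r-2a)=r-2$, independently of $a$. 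As there are only finitely many admissible values of $a$, the image has dimension at most $r-2$, so $\dim Q_3(\Gamma,\mathbb{P}^r)=r-2$.

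To pass to arbitrary $k$, I would introduce the incidence variety $I=\{(Q,\Gamma')\mid \Gamma'\subseteq Q\}$ inside $|\mathcal{O}_{\mathbb{P}^r}(2)|$ times the space of rational normal curves. By projective normality the second projection exhibits $I$ as a projective bundle over the orbit of curves, so $I$ is irreducible of known dimension. The group $\mathrm{PGL}_{r+1}$ acts on $I$ compatibly with the projection to $|\mathcal{O}_{\mathbb{P}^r}(2)|$, and since it acts transitively on each rank stratum, the fiber dimension of this projection is constant along the stratum of quadrics of a given rank. The rank $3$ computation above pins this constant down over the rank $3$ stratum (and, because the generic fiber dimension is the minimal one, simultaneously forces the projection to be dominant), while over the dense stratum of full rank quadrics the fiber attains its generic dimension. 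Since the stratum of quadrics of rank exactly $k$ lies in the closure of the full rank stratum and contains rank $3$ quadrics in its own closure, upper semicontinuity of fiber dimension squeezes this constant to one and the same value over every rank. Reading the resulting dimension of $I$ off the other projection, to the fixed curve $\Gamma$, then yields $\dim Q_k(\Gamma,\mathbb{P}^r)=q(0,r,r,k)$.

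The technical heart is the rank $3$ correspondence: I expect the main work to lie in verifying that every rank $3$ quadric through $\Gamma$ genuinely arises from the factorization above (surjectivity of the parametrization) and that the various values of $a$ do not combine into a higher dimensional locus. The bootstrap is then essentially formal, the only point requiring care being that each intermediate rank stratum is actually met by quadrics through some rational normal curve; this follows because the nested loci $Q_3(\Gamma,\mathbb{P}^r)\subseteq\cdots\subseteq Q_{r+1}(\Gamma,\mathbb{P}^r)=\mathbb{P}(I_2(\Gamma,\mathcal{O}(2)))$ have strictly increasing dimensions and are therefore pairwise distinct.
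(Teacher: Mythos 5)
Your rank $3$ computation is correct and is in fact the same as the paper's: the factorization $f_0=du^2$, $f_1=dw^2$, $f_2=duw$ is exactly the paper's correspondence between rank $3$ quadrics through $\Gamma$ and pairs $(A,F)$ with $2A+F=\mathcal{O}_\Gamma(1)$, where $A=\langle u,w\rangle$ and $F=\mathrm{div}(d)$ is supported on $\Gamma\cap\sing(Q)$; your count $2(a-1)+(r-2a)=r-2$ agrees with the paper's $\dim\textnormal{Gr}(2,\tfrac{r-x}{2}+1)+x$. The problem is the bootstrap. The inequality you actually need is the upper bound $d_k\leq r^2-4$ for the fiber dimension over the rank-exactly-$k$ stratum, and you derive it from ``rank $3$ quadrics lie in the closure of the rank $k$ stratum, so by upper semicontinuity $d_k\leq d_3$.'' Upper semicontinuity of fiber dimension on the \emph{target} holds for proper morphisms, but the projection $I\to|\mathcal{O}_{\mathbb{P}^r}(2)|$ is not proper: $I$ is a projective bundle over the $\mathrm{PGL}_{r+1}$-orbit of rational normal curves, which is only quasi-projective. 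Concretely, as a family of rank $k$ quadrics degenerates to a rank $3$ quadric $Q_0$, the rational normal curves they contain may degenerate to curves that are no longer rational normal, i.e.\ the corresponding points escape from $I$; the fiber over $Q_0$ can therefore be strictly smaller than the nearby fibers, and no bound $d_k\leq d_3$ follows. (Chevalley semicontinuity on the source does not help: the closed locus $\{x\in I:\dim_x f^{-1}(f(x))\geq r^2-3\}$ need not have closed image.) The other half of your squeeze, $d_k\geq d_{r+1}$, is valid, but it only reproduces the lower bound $\dim Q_k(\Gamma,\mathbb{P}^r)\geq q(0,r,r,k)$, which was free from the start.

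The paper's Lemma \ref{Canonical Curves}, which Lemma \ref{RNC's} is declared to imitate, avoids this by installing a \emph{second} direct anchor at rank $4$ (via the decomposition $\mathcal{O}_\Gamma(1)=A_1+A_2+F$ into two pencils, i.e.\ factorizations $f_0f_1=f_2f_3$, with its own parameter count) and then propagating \emph{upward} in rank: the rank $4$ stratum is shown to consist of quadrics over which the fiber of $I\to|\mathcal{O}_{\mathbb{P}^r}(2)|$ has exactly the generic dimension, and since rank $4$ quadrics lie in the closure of every higher rank stratum, the locus of quadrics with generic fiber dimension meets each such stratum, whence by $\mathrm{PGL}$-homogeneity it contains it. To repair your argument you must either supply this rank $4$ computation (your UFD method adapts: coprime-in-pairs factorizations of $f_0f_1-f_2f_3=0$ on $\mathbb{P}^1$), or give an independent upper bound on $d_j$ for each intermediate rank $j$ --- for the rational normal curve this can also be done completely explicitly, since $I_2(\Gamma)$ and its rank stratification are classically described by the $2\times r$ catalecticant matrix. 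As written, the intermediate ranks $4\leq k\leq r$ are not controlled. A smaller circularity: your closing claim that each rank stratum is met because the $Q_k(\Gamma,\mathbb{P}^r)$ have strictly increasing dimensions presupposes the dimension statement being proved; nonemptiness of each stratum should instead be checked directly.
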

\begin{proof}
First we confirm the case $k=3$, that is, we show that for a rational normal curve $\Gamma\subseteq \mathbb{P}^r$, we have that
\begin{equation*}
\dim Q_3(\Gamma,\mathbb{P}^r)=q(0,r,r,3).
\end{equation*}
As explained in the previous lemma, the elements of $Q_3(\Gamma,\mathbb{P}^r)$ are in one to one correspondence with the data $(A,F)$ such that
\begin{equation*}
2A+F = \mathcal{O}_{\Gamma}(1),
\end{equation*}
where $A$ is a pencil and $F$ is a divisor supported on the singular locus of the associated quadric. If we let $x = \deg(F)$ then the parameter count for the pairs $(A,F)$ yields
\begin{equation*}
\dim \textnormal{Gr}\left(2, \frac{r-x}{2}+1\right) + x=r-2.
\end{equation*}
Since $q(0,r,r,3)=r-2$, that finishes the proof of the case $k=3$. The rest of the proof is analogous to the proof of Lemma \ref{Canonical Curves}, we leave these details to the reader.
\end{proof}

In the next proposition we prove the inductive step, which takes care of the cases $g-d+r=0,1$. In the proof we will need the following lemma from \cite{BE1}:
\begin{lem}\label{Ballico lemma}
Let $\rho(g,r,d)\geq 0$ and $0\leq g\leq d-r + \left \lfloor{\frac{d-r-2}{r-2}}\right \rfloor$. If $C\in \mathcal{H}_{d,g,r}$ and $\ell$ is a 2-secant line of $C$ then $C\cup \ell\in \mathcal{H}_{d+1,g+1,r}$.
\end{lem}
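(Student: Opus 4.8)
The plan is to read this as a smoothing statement and prove it by deformation theory of the nodal curve $X \defeq C \cup \ell$. First I would record the invariants: since $\ell$ is a $2$-secant line, $X$ is a connected nodal curve with exactly two nodes $p_1,p_2$, of degree $d+1$ and arithmetic genus $g+1$. The assertion $X\in\mathcal{H}_{d+1,g+1,r}$ really packages two separate claims, and I would prove them in turn: first, that $[X]$ lies in the closure of a single component of the Hilbert scheme whose general member is a smooth, irreducible, nondegenerate curve of degree $d+1$ and genus $g+1$ (a \emph{smoothing} step); and second, that this general member carries general moduli (a \emph{moduli-identification} step).

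For the smoothing step the engine is the normal sheaf $N_X = N_{X/\mathbb{P}^r}$. I would combine the Mayer--Vietoris sequence
\[0 \to N_X \to N_X|_C \oplus N_X|_\ell \to N_X|_{\{p_1,p_2\}} \to 0\]
with the two restriction sequences comparing $N_X|_C$ to $N_C$ and $N_X|_\ell$ to $N_\ell$, each acquiring a twist by the node divisor on its component. The essential inputs are that $C$ is a general curve in the good component, so that $H^1(C,N_C)=0$ (the Hilbert scheme is already smooth of expected dimension at $[C]$), and that $N_\ell\cong\mathcal{O}_{\mathbb{P}^1}(1)^{\oplus(r-1)}$, so that after the node twist $N_X|_\ell$ has nonnegative splitting type and $H^1(\ell,N_X|_\ell)=0$. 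Because the skyscraper term has no higher cohomology, chasing the sequence shows that the vanishing $H^1(X,N_X)=0$ is \emph{equivalent} to the sections on the two components gluing freely at the nodes, i.e. to surjectivity of the evaluation map $H^0(N_X|_C)\oplus H^0(N_X|_\ell)\to N_X|_{p_1}\oplus N_X|_{p_2}$. Granting this, the Hilbert scheme is smooth at $[X]$ of the expected dimension; passing to the equisingular subsheaf $N'_X$ with its quotient $\bigoplus_i T^1_{p_i}$ in $0\to N'_X\to N_X\to T^1\to 0$ and obtaining $H^1(N'_X)=0$ the same way forces $H^0(N_X)\to H^0(T^1)$ to be surjective, so both nodes smooth simultaneously. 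This yields a flat family whose general fibre $C'$ is a smooth curve of degree $d+1$ and genus $g+1$, and the connectedness and nondegeneracy of $X$ keep $C'$ irreducible and nondegenerate.

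For the moduli step I would show $C'$ has general moduli. Abstractly, the stable model of $X$ is obtained by contracting the rational component $\ell$, which carries only the two special points $q_1,q_2\in C$ lying over the nodes; the contraction identifies $q_1$ with $q_2$, so $[X]$ maps to the point $[C/(q_1\sim q_2)]$ of the boundary divisor $\Delta_{irr}\subset\overline{\mathcal{M}}_{g+1}$. As $C$ ranges over general curves of genus $g$ and $(q_1,q_2)$ over general pairs, these stable models sweep out a dense subset of $\Delta_{irr}$, whose dimension $3g-1$ matches the count $\dim\mathcal{M}_g+2$. Since a general one-nodal curve of genus $g+1$ smooths to a curve of general moduli, the generic smoothing produced above has general moduli, whence $[C']\in\mathcal{H}_{d+1,g+1,r}$.

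The main obstacle is exactly the gluing surjectivity at the two nodes that controls $H^1(X,N_X)$; this is a positivity-and-separation property of $N_C$ at the pair $\{q_1,q_2\}$, and it is where the numerical hypothesis $0\le g\le d-r+\lfloor (d-r-2)/(r-2)\rfloor$ must be invoked. The bound on $g$ in terms of $d$ and $r$ is precisely what keeps the restriction sequences cohomologically tame and guarantees that the two nodes impose independent conditions, so that the evaluation map is surjective; outside this range the twist by the nodes can leave residual $H^1$ and the smoothing may fail or leave the good component. A secondary point, easily arranged for a general $C$ and a general secant but worth checking cleanly, is that $\ell$ meets $C$ transversally in two distinct points and that $X$ remains nondegenerate and unobstructed. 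Once the gluing vanishing is in hand, the remainder is the standard smoothing-and-specialization package and presents no further difficulty.
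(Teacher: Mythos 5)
The paper does not prove this lemma at all: it is quoted verbatim from Ballico--Ellia and the ``proof'' is the single line ``See Lemma 2.2 in [BE1].'' So there is no in-paper argument to compare yours against; what can be judged is whether your sketch would stand on its own as a proof, and as written it does not.

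The decisive gap is that you never actually carry out the step you yourself identify as ``the main obstacle.'' Your entire argument funnels into the surjectivity of the evaluation map $H^0(N_X|_C)\oplus H^0(N_X|_\ell)\to N_X|_{p_1}\oplus N_X|_{p_2}$ (equivalently $H^1(X,N_X)=0$), and you state that the hypothesis $0\le g\le d-r+\lfloor (d-r-2)/(r-2)\rfloor$ ``is precisely what'' makes this work --- but you give no mechanism connecting that inequality to the surjectivity. Asserting where a hypothesis \emph{must} enter is not the same as showing that it \emph{does}; until that implication is supplied, the proof has a hole exactly at its load-bearing point. A second, related gap: you take as an ``essential input'' that $H^1(C,N_C)=0$ for the general $C$ in the distinguished component $\mathcal{H}_{d,g,r}$. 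This is not a formal consequence of $\rho(g,r,d)\ge 0$ or of the definition of that component (one only gets $\dim\mathcal{H}_{d,g,r}=\chi(N_C)$, which does not by itself force $h^1(N_C)=0$); it is a nontrivial statement that itself needs either a proof or a precise citation, and it is plausibly where the numerical bound on $g$ is really consumed. The surrounding architecture --- normalization sequence for $N_X$, $N_\ell\cong\mathcal{O}_{\mathbb{P}^1}(1)^{\oplus(r-1)}$, simultaneous smoothing of the nodes via $H^0(N_X)\twoheadrightarrow H^0(T^1)$, and the identification of the stable model with a point of $\Delta_{irr}$ to conclude that the smoothing dominates $\mathcal{M}_{g+1}$ --- is sound and is the standard route, but the lemma is not proved until the two cohomological inputs above are established.
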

\begin{proof}
See Lemma 2.2 in \cite{BE1}.
\end{proof}

\begin{prop}\label{g-d+r=0,1}
Theorem \ref{main thm} holds whenever $\rho(g,r,d)\geq 0$ and $g-d+r=0$ or $1$.
\end{prop}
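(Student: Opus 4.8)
The plan is to run an induction on the genus $g$ with the ambient $r$ and the difference $g-d+r\in\{0,1\}$ held fixed, so that $d$ increases in lockstep with $g$. The two base cases, $g=0$ (rational normal curves) and $g=r+1$ (canonical curves), are furnished by Lemmas \ref{RNC's} and \ref{Canonical Curves}. For the inductive step I assume Theorem \ref{main thm} for a general $[C'\subseteq\mathbb{P}^r]\in\mathcal{H}_{d-1,g-1,r}$ and specialize a general member of $\mathcal{H}_{d,g,r}$ to the nodal curve $C'\cup\ell$, where $\ell$ is a general $2$-secant line of $C'$. To justify the specialization I must know that $C'\cup\ell$ lies in the component of general moduli $\mathcal{H}_{d,g,r}$; this is precisely the content of Lemma \ref{Ballico lemma}, whose numerical hypotheses I would check hold throughout the range $g-d+r\le 1$. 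This verification is tight just above the base cases (an equality at the first step above the canonical locus), and any small-genus cases falling outside the stated bound of Lemma \ref{Ballico lemma} I would dispose of directly, using that such nodal curves are classically known to smooth to curves of general moduli.

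Granting this, the strategy is to push the bound through the degeneration. The dimension of the fibres of the relative variety of rank at most $k$ quadrics containing the members of a family of curves is upper semicontinuous on the base, so it suffices to prove $\dim Q_k(C'\cup\ell,\mathbb{P}^r)\le q(g,r,d,k)$; combined with the lower bound $\dim\ge q(g,r,d,k)$ that holds for every component (being an intersection of two subvarieties of a projective space, as remarked after Theorem \ref{main thm}), this yields the pure expected dimension for the general smooth curve. To estimate $\dim Q_k(C'\cup\ell,\mathbb{P}^r)$ I would use that a quadric vanishes on $C'\cup\ell$ precisely when it vanishes on both $C'$ and $\ell$, whence $Q_k(C'\cup\ell,\mathbb{P}^r)=Q_k(C',\mathbb{P}^r)\cap H_\ell$, with $H_\ell$ the linear space of quadrics containing $\ell$. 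Writing $\ell=\overline{pq}$ with $p,q\in C'$, every quadric through $C'$ already vanishes at $p$ and $q$, so containing all of $\ell$ amounts to the single extra linear condition $B_Q(p,q)=0$ on the associated symmetric bilinear form $B_Q$. Thus $H_\ell$ cuts $Q_k(C',\mathbb{P}^r)$ in at most a hyperplane, and since by induction $Q_k(C',\mathbb{P}^r)$ is pure of dimension $q(g-1,r,d-1,k)=q(g,r,d,k)+1$, I would conclude that $\dim Q_k(C'\cup\ell,\mathbb{P}^r)=q(g,r,d,k)$ with purity intact, as soon as I know that $H_\ell$ contains no irreducible component of $Q_k(C',\mathbb{P}^r)$.

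The heart of the matter, and the step I expect to be the main obstacle, is exactly this last transversality for a general secant line. I would argue by contradiction: if some component $Z$ of $Q_k(C',\mathbb{P}^r)$ were contained in $H_\ell$ for a general $\ell$, then, the condition $B_Q(p,q)=0$ being closed in $(p,q)$, one would have $B_Q(p,q)=0$ for every $Q\in Z$ and every pair $p,q\in C'$. Fixing one nonzero $Q\in Z$ and using that $C'$ is nondegenerate, so that its points span the ambient vector space, the vanishing of $B_Q$ on all pairs of points of $C'$ forces $B_Q\equiv 0$, i.e. $Q=0$ in characteristic zero, a contradiction. As there are only finitely many components and for each the bad locus of secant lines is closed and proper, a general $2$-secant line works for all of them simultaneously. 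Finally, the same argument covers uniformly the degenerate ranges in which $Q_k(C',\mathbb{P}^r)$ is empty or finite, under the convention that the empty variety has dimension $-1=q(g,r,d,k)$, and this completes the inductive step and hence the proposition.
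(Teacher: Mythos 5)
Your proposal is correct and follows the same architecture as the paper's proof: induction on the genus with $r$ and $g-d+r$ held fixed, base cases supplied by Lemmas \ref{RNC's} and \ref{Canonical Curves}, degeneration to $C'\cup\ell$ inside $\mathcal{H}_{d,g,r}$ via Lemma \ref{Ballico lemma}, and the observation that $q$ drops by exactly one at each step, so that everything reduces to showing that containing a general secant line cuts every component of $Q_k(C',\mathbb{P}^r)$ properly. The one place you genuinely diverge is in that last transversality step: the paper disposes of it by citing Corollary 2.3 in \cite{Ca}, namely that the secant variety of a nondegenerate curve is not contained in \emph{any} quadric, whereas you give a self-contained proof of exactly the special case needed --- if a component $Z$ lay in $H_\ell$ for a general, hence (by closedness in $(p,q)$ and irreducibility of $C'\times C'$) every, secant line $\ell$, then any nonzero $Q\in Z$ would satisfy $B_Q(p,q)=0$ for all $p,q\in C'$, forcing $B_Q\equiv 0$ since $C'$ is nondegenerate. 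This is more elementary and costs nothing. You are also more careful than the paper on one point: the numerical hypothesis of Lemma \ref{Ballico lemma} fails at the first two steps above the rational normal curve (when $g-d+r=0$ one needs $d\geq r+2$), so the smoothing of $\Gamma\cup\ell$ to an elliptic normal curve, and the next step, require a separate classical argument, which you correctly flag and the paper applies the lemma to without comment. Your explicit appeal to upper semicontinuity of fibre dimension over $\mathcal{H}_{d,g,r}$, and to the a priori lower bound to recover purity, just spells out bookkeeping the paper leaves implicit.
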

\begin{proof}
We fix $r\geq 3$ and apply induction on $g$. As we already pointed out, the base step of the induction was confirmed in Lemma \ref{Canonical Curves} and Lemma \ref{RNC's}. For the inductive step, we let $[C\subseteq \mathbb{P}^r]\in\mathcal{H}_{d,g,r}$ such that
\begin{equation*}
\dim Q_k(C,\mathbb{P}^r)=q(g,r,d,k)
\end{equation*}
for all $k\geq 3$. Let $\ell$ be a general 2-secant line of $C$ and consider the nodal curve $X\defeq C\cup \ell$. By Lemma \ref{Ballico lemma}, we have that $[X\subseteq \mathbb{P}^r]\in \mathcal{H}_{d+1,g+1,r}$. Since
\begin{equation*}
q(g+1,r,d+1,k)=q(g,r,d,k)-1,
\end{equation*}
all we need to show is that to contain the secant line $\ell$ imposes a nontrivial condition on the variety $Q_k(C,\mathbb{P}^r)$. This follows from the fact that the secant variety of a non-degenerate curve does not lie in \emph{any} quadric (See Corollary 2.3 in \cite{Ca}).
\end{proof}
The only remaining case is the case of incomplete embeddings (i.e. $g-d+r<0$), which will be treated in the next proposition. We first make a simple observation, which we will use in the proof of the proposition.

\begin{lem}\label{less stupid lemma}
Let $C$ be a smooth curve of genus $g$ and $\ell=(L,V)$ a very ample $g^r_d$ on the curve. Let $W^\vee$ be the kernel of the map $H^0(L)^\vee\to V^\vee$ induced by the inclusion $V\subseteq H^0(L)$. Consider the following commutative diagram
\begin{center}
\begin{tikzpicture}
  \node (C) {$C$};
  \node (d-g) [node distance=2.5cm, right of=C] {$\mathbb{P}(H^0(L)^\vee)$};
  \node (r) [node distance=1.5cm, below of=d-g] {$\mathbb{P}(V^\vee)$};
  \draw[->] (C) to node {$|L|$} (d-g);
  \draw[->] (C) to node [swap] {$\ell$} (r);
  \draw[->] (d-g) to node {$\pi$} (r);
\end{tikzpicture}
\end{center}
where $\pi$ is the projection with center $\mathbb{P}(W^\vee)$. We have that
\begin{equation*}
\dim Q_k(C,\ell)=\dim Q_k(C,L)[\mathbb{P}(W^\vee)], 
\end{equation*}
where 
\begin{equation*}
Q_k(C,L)[\mathbb{P}(W^\vee)]\defeq\left\lbrace Q\in Q_k(C,L)\mid \mathbb{P}(W^\vee)\subseteq \sing (Q)\right\rbrace.
\end{equation*}
\end{lem}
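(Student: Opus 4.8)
The plan is to turn the statement into a single observation: the cone construction attached to the projection $\pi$ identifies $Q_k(C,\ell)$ with $Q_k(C,L)[\mathbb{P}(W^\vee)]$ as the restriction of a linear embedding, after which equality of dimensions is automatic. First I would pass to symmetric forms. Quadrics in $\mathbb{P}(V^\vee)$ are parametrized by $\mathbb{P}(\sym^2 V)$ and quadrics in $\mathbb{P}(H^0(L)^\vee)$ by $\mathbb{P}(\sym^2 H^0(L))$, and the inclusion $V\subseteq H^0(L)$ induces a linear embedding $\iota:\mathbb{P}(\sym^2 V)\hookrightarrow \mathbb{P}(\sym^2 H^0(L))$. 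Geometrically $\iota$ is exactly pullback along $\pi$: it sends a quadric $\bar q$ in $\mathbb{P}(V^\vee)$ to the cone over $\bar q$ with vertex $\mathbb{P}(W^\vee)$. Since $\ell$ is very ample, hence base-point-free, $|L|$ maps $C$ away from the center $\mathbb{P}(W^\vee)$, so the displayed diagram commutes and $\iota$ carries quadrics through $\ell(C)$ to quadrics through $|L|(C)$.

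The content of the proof then reduces to three compatibilities, each a short piece of linear algebra. (i) Viewing $q\in \sym^2 H^0(L)$ as a symmetric bilinear form on $H^0(L)^\vee$, membership in the subspace $\sym^2 V$ is equivalent to the radical of $q$ containing $W^\vee=\ker\bigl(H^0(L)^\vee\to V^\vee\bigr)$, since a form descends to the quotient $V^\vee=H^0(L)^\vee/W^\vee$ precisely when it is annihilated by $W^\vee$ in one argument; equivalently $q\in\sym^2 V$ iff $\mathbb{P}(W^\vee)\subseteq\sing(Q)$. (ii) The rank is preserved by $\iota$: coning over $\bar q$ only enlarges the radical by the vertex directions, so the induced form on the quotient $V^\vee$ has the same rank, i.e. $\rk(\iota(\bar q))=\rk(\bar q)$, and therefore $\iota$ matches rank-$\leq k$ quadrics with rank-$\leq k$ quadrics. (iii) Because the multiplication map $\sym^2 V\to H^0(L^{\otimes 2})$ is the restriction of $\sym^2 H^0(L)\to H^0(L^{\otimes 2})$, one gets $I_2(C,\ell)=I_2(C,L)\cap \sym^2 V$, so $\iota$ identifies $\mathbb{P}(I_2(C,\ell))$ with $\mathbb{P}(I_2(C,L))\cap\iota\bigl(\mathbb{P}(\sym^2 V)\bigr)$.

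Combining (i)--(iii), the linear embedding $\iota$ restricts to an isomorphism from $Q_k(C,\ell)=\mathbb{P}(I_2(C,\ell))\cap Q_k(\mathbb{P}^r)$ onto the locus of quadrics in $Q_k(C,L)$ whose singular locus contains $\mathbb{P}(W^\vee)$, which is $Q_k(C,L)[\mathbb{P}(W^\vee)]$ by definition. An isomorphism of varieties preserves dimension, giving the claimed equality.

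I expect the only delicate steps to be (i) and (ii): pinning down the dual conventions so that the projection center $\mathbb{P}(W^\vee)$, the radical of the form, and the subspace $\sym^2 V$ all line up, and checking that coning leaves the rank unchanged. Compatibility (iii) is formal, and once (i) and (ii) are in place the remainder is bookkeeping; but these two are where all the geometric content—the equivalence of ``incomplete linear series'' with ``vertex-containing cones of the same rank''—actually resides.
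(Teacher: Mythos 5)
Your proposal is correct and takes essentially the same route as the paper: the paper's proof is exactly the observation that projection by $\pi$ and the cone construction with vertex $\mathbb{P}(W^\vee)$ are mutually inverse bijections between $Q_k(C,L)[\mathbb{P}(W^\vee)]$ and $Q_k(C,\ell)$, which is your map $\iota$ and its inverse. You merely spell out the linear-algebra compatibilities (radical contains $W^\vee$ iff the form lies in $\sym^2 V$, rank preservation, and $I_2(C,\ell)=I_2(C,L)\cap\sym^2 V$) that the paper leaves as ``evident.''
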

\begin{proof}
There is an evident map $Q_k(C,L)[\mathbb{P}(W^\vee)]\to Q_k(C,\ell)$ defined by projecting quadrics by $\pi$. The inverse of this map is given by assigning $Q$ to the cone over $Q$ with vertex $\mathbb{P}(W^\vee)$.
\end{proof}

\begin{prop}
Theorem \ref{main thm} holds in the range $g-d+r<0$.
\end{prop}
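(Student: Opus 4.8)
The plan is to reduce the incomplete case ($g-d+r<0$) to the complete case already proven, using the projection picture set up in Lemma \ref{less stupid lemma}. The setting is a general curve $C$ of genus $g$ with a general $g^r_d$ denoted $\ell=(L,V)$, where $\rho(g,r,d)\geq 0$ but $g-d+r<0$. Since $\rho(g,r,d)=g-(r+1)(g-d+r)\geq 0$ and $g-d+r<0$ forces the complete series $|L|$ to have dimension $r'\defeq h^0(L)-1 > r$; by Riemann--Roch, when $L$ is nonspecial one computes $r'=d-g$. First I would observe that, by Lemma \ref{less stupid lemma}, we have
\begin{equation*}
\dim Q_k(C,\ell)=\dim Q_k(C,L)[\mathbb{P}(W^\vee)],
\end{equation*}
so the task becomes controlling how strongly the requirement $\mathbb{P}(W^\vee)\subseteq\sing(Q)$ cuts down the complete variety $Q_k(C,L)$. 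The point of working in the complete embedding $C\hookrightarrow\mathbb{P}(H^0(L)^\vee)=\mathbb{P}^{r'}$ is that here $g-d+r'=g-d+(d-g)=0$, which lands precisely in the base range $g-d+r'\leq 1$ where the earlier results apply: Proposition \ref{g-d+r=0,1} (via Lemma \ref{RNC's} and Lemma \ref{Canonical Curves}) gives $\dim Q_k(C,L)=q(g,r',d,k)$ with $q$ having the expected value.

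**Next I would** estimate the dimension drop imposed by the linear condition $\mathbb{P}(W^\vee)\subseteq\sing(Q)$. A quadric $Q$ with symmetric matrix $M$ contains $\mathbb{P}(W^\vee)$ in its singular locus exactly when $M$ annihilates the corresponding subspace; writing $m\defeq\dim W=r'-r$, this is a linear condition on the space of quadrics. The heart of the matter is that a rank-$\leq k$ quadric whose vertex contains a fixed $(m-1)$-plane is, after projecting away that plane, a rank-$\leq k$ quadric in the quotient $\mathbb{P}^r$; conversely every rank-$\leq k$ quadric in $\mathbb{P}^r$ pulls back to such a cone. Thus the projection $\pi$ sets up precisely the isomorphism of Lemma \ref{less stupid lemma}, and the expected codimension of the cone condition inside $Q_k(\mathbb{P}^{r'})$ should match the difference of the discrepancy terms,
\begin{equation*}
q(g,r',d,k)-q(g,r,d,k)=\binom{r'+2}{2}-\binom{r'-k+2}{2}-\binom{r+2}{2}+\binom{r-k+2}{2}.
\end{equation*}
The key algebraic fact to verify is that imposing the vertex to contain $\mathbb{P}(W^\vee)$ drops the dimension of the rank-$\leq k$ stratum by exactly this amount, so that the expected dimension is preserved under projection.

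**The main obstacle** I anticipate is the inequality direction: as in the complete case, the lower bound $\dim Q_k(C,\ell)\geq q(g,r,d,k)$ is automatic from the intersection-theoretic bound (intersecting $\mathbb{P}(I_2(C,\ell))$ with $Q_k(\mathbb{P}^r)$), so everything rests on the upper bound. Here I must rule out the possibility that the cone locus $Q_k(C,L)[\mathbb{P}(W^\vee)]$ meets $Q_k(C,L)$ in excess dimension because the fixed linear space $\mathbb{P}(W^\vee)$ interacts specially with the singular loci of the quadrics containing $C$. Since $C$ and $\ell$ are general, $W$ (equivalently the center of projection) is in general position relative to $C$, and I would argue that for general $\ell$ the condition behaves generically along $Q_k(C,L)$, reducing the count to the clean linear-algebra computation above. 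Controlling this genericity—ensuring the projection center is transverse to the relevant Schubert-type conditions defining the rank stratification—is where the real work lies, and I would handle it by a dimension count on an incidence variety fibered over the choices of subseries $V\subseteq H^0(L)$, exactly paralleling the incidence-variety arguments used in Lemma \ref{Canonical Curves}.
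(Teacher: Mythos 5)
Your strategy coincides with the paper's: reduce to the complete series via Lemma \ref{less stupid lemma} (noting that $L$ is nonspecial for general $C$, so $r'=d-g$ and Proposition \ref{g-d+r=0,1} gives $\dim Q_k(C,L)=q(g,d-g,d,k)$), then control the condition $\mathbb{P}(W^\vee)\subseteq\sing(Q)$ for a general center by an incidence variety over the Grassmannian of possible centers. The gap is that you only announce this dimension count; the count is the entire content of the proof, and it contains the one genuinely delicate point, which your outline does not touch. Concretely, the paper sets $I=\{(Q,\Lambda):\Lambda\subseteq\sing Q\}\subseteq Q_k(C,L)\times\textnormal{Gr}(d-g-r,d-g+1)$ and computes $\dim I$ via the projection $\pi_1$ to $Q_k(C,L)$: over a quadric of rank exactly $k$ the fiber is $\textnormal{Gr}(d-g-r,d-g-k+1)$, so $\dim I\geq q(g,d-g,d,k)+(d-g-r)(r+1-k)$. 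A priori a component of $I$ could exceed this bound by lying over a lower-rank stratum $Q_{k'}(C,L)$, $k'<k$, where the vertices --- and hence the $\pi_1$-fibers --- are larger. One must check that $q(g,d-g,d,k')+(d-g-r)(r+1-k')$ is strictly increasing in $k'$ (the increment from $k'-1$ to $k'$ is $r-k'+2>0$), which rules this out and pins down $\dim I$ exactly; only then does the general fiber of $\pi_2$, which by Lemma \ref{less stupid lemma} computes $\dim Q_k(C,\ell)$, come out to $q(g,r,d,k)$. Your phrase ``fibered over the choices of subseries'' suggests computing $\dim I$ via $\pi_2$, which is circular: its fibers are exactly the unknown.

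A second omission: when $q(g,r,d,k)<0$ the assertion to prove is $Q_k(C,\ell)=\emptyset$, which follows from the same count by observing that $\dim I<\dim\textnormal{Gr}(d-g-r,d-g+1)$, so $\pi_2$ is not dominant and its general fiber is empty. Your remark that the lower bound is automatic ``so everything rests on the upper bound'' elides this case. Both points are fixable within your framework --- which is indeed the paper's --- but as written the proof of the key inequality is missing.
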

\begin{proof}
We fix integers $g,r,d,k$ such that $g-d+r<0$ and we let $C$ be a general curve of genus $g$. The space of $g^r_d$'s on $C$ is defined as follows
\begin{equation*}
G^r_d(C)\defeq \left\{(L,V)\mid L\in\pic^d(C), V\subseteq H^0(L), \dim V = r+1\right\}.
\end{equation*}
In the range $g-d+r<0$, the variety $G^r_d(C)$ is irreducible and sits over the Picard variety $\pic^d(C)$ as a Grassmann bundle. Therefore a general $g^r_d$ on $C$ is simply a general point on the Grassmannian 
\begin{equation*}
\textnormal{Gr}\left(h^0(L)-r-1,h^0(L)\right)
\end{equation*}
for a general line bundle $L\in\pic^d(C)$.

We fix a general line bundle $L\in\pic^d(C)$. By Proposition \ref{g-d+r=0,1} we have that
\begin{equation*}
\dim Q_k(C,L)=q(g,d-g,d,k).
\end{equation*}
We consider the incidence correspondence
\begin{equation*}
I\defeq\left\lbrace(Q,\Lambda)\mid \Lambda\subseteq \sing Q\right\rbrace\subseteq Q_k(C,L)\times \textnormal{Gr}(d-g-r,d-g+1),
\end{equation*}
and the projection maps
\begin{center}
\begin{tikzcd}[column sep={{{{4em,between origins}}}}]
 & I\arrow{dl}[swap]{\pi_1}\arrow{dr}{\pi_2} & \\
Q_k(C,L) && \textnormal{Gr}(d-g-r,d-g+1)
\end{tikzcd}
\end{center}
The singular locus of a rank $k$ quadric has dimension $d-g-k$ and therefore the relative dimension of $\pi_1$ over the set of quadrics of rank exactly $k$ is equal to the dimension of the Grassmannian $\textnormal{Gr}(d-g-r,d-g-k+1)$. Therefore, we have that
\begin{equation*}
\dim I\geq q(g,d-g,d,k)+(d-g-r)(r+1-k).
\end{equation*}
In fact this is an equality: If $Z$ is a component of $I$ with dimension strictly greater than that number, we must have that
\begin{equation*}
\pi_1(Z)\subseteq Q_{k-1}(C,L).
\end{equation*}
Let $k'$ be the smallest integer such that $\pi_1(Z)\subseteq Q_{k'}(C,L)$. Then a general element of $\pi_1(Z)$ is a quadric of rank $k'$ and by the same dimension count we get that 
\begin{equation*}
\dim Z =q(g,d-g,d,k')+(d-g-r)(r+1-k'),
\end{equation*}
which is strictly smaller than $q(g,d-g,d,k)+(d-g-r)(r+1-k)$. Therefore we conclude that
\begin{equation*}
\dim I= q(g,d-g,d,k)+(d-g-r)(r+1-k).
\end{equation*}
Now there are two cases to consider. First, if $q(g,r,d,k)\geq 0$ then the map $\pi_2$ is surjective, for if $\Lambda\in \textnormal{Gr}(d-g-r,d-g+1)$ then 
\begin{equation*}
\pi_2^{-1}(\Lambda) = Q_k(C,L)[\Lambda],
\end{equation*}
and by Lemma \ref{less stupid lemma} we have that
\begin{equation*}
\dim Q_k(C,L)[\Lambda]=\dim Q_k(C,\mathbb{P}^r)\geq q(g,r,d,k)\geq 0.
\end{equation*}
Therefore, the dimension of the fiber of $\pi_2$ at a general point is equal to
\begin{equation*}
q(g,d-g,d,k)+(d-g-r)(r+1-k)-\dim \textnormal{Gr}(d-g-r,d-g+1)=q(g,r,d,k).
\end{equation*}
Hence we conclude that $\dim Q_k(C,\mathbb{P}^r)=q(g,r,d,k)$. 

On the other hand, if $q(g,r,d,k)<0$ then $\pi_2$ is not surjective and thus for a general element $\Lambda\in \textnormal{Gr}(d-g-r,d-g+1)$, the variety $Q_k(C,L)[\Lambda]$ and hence $Q_k(C,\mathbb{P}^r)$ is empty.
\end{proof}
\section{Computation of the class $\overline{\mathfrak{Quad}}^k_{g,n}$}

In the computation of the class $\overline{\mathfrak{Quad}}^k_{g,n}$, we make essential use of a recent result by Farkas and Rim\'{a}nyi:
\begin{thm}[\cite{FR}]\label{Gabi&Rimanyi}
Let $X$ be an algebraic variety and suppose that there is a morphism of vector bundles
\begin{equation*}
\phi : \sym^2 \mathcal{E} \to \mathcal{F}
\end{equation*}
on $X$, where $\mathcal{E}$ and $\mathcal{F}$ are vector bundles of ranks $e$ and $f$, respectively. For $k \leq e$, we define the subvariety
\begin{equation*}
\Sigma^k_{e,f}(\phi)\defeq \left\lbrace x\in X\mid \exists \textnormal { }0\neq q\in \ker(\phi(x))\quad \textnormal{s.t.}\quad\textnormal{rk}(q)\leq k\right\rbrace.
\end{equation*}
If $f = {e+1\choose 2} - {e-k+1\choose 2}$ then $\Sigma^k_{e,f}(\phi)$ is expected to be of codimension one in $X$ and its virtual class is given by the formula
\begin{equation*}
[\Sigma^k_{e,f}(\phi)] = A^k_e\cdot \left(c_1(\mathcal{F})-\frac{2f}{e}\cdot c_1(\mathcal{E})\right),
\end{equation*}
where
\begin{equation*}
A^k_e\defeq \prod_{t=0}^{e-k-1}\frac{{e+i\choose e-k-t}}{{2t+1\choose t}}.
\end{equation*}
\end{thm}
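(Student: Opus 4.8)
The plan is to realise $\Sigma^k_{e,f}(\phi)$ as the image, under a projective-bundle projection, of an honest intersection, and then to reduce the class computation to two pieces: the degree of the determinantal variety of symmetric matrices, and a scaling relation forced by projective invariance. Write $\rho:\mathbb{P}(\sym^2\mathcal{E})\to X$ for the bundle of quadrics in the fibres of $\mathcal{E}$, let $\zeta=c_1(\mathcal{O}(1))$ be the relative hyperplane class, and let $\mathcal{Q}_k\subseteq\mathbb{P}(\sym^2\mathcal{E})$ denote the relative locus of quadrics of rank at most $k$. In each fibre $\mathcal{Q}_k$ is the classical locus of rank $\leq k$ symmetric matrices, of codimension $c=\binom{e-k+1}{2}$ and degree $A^k_e$ by Harris--Tu \cite{HT}. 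The sublocus $\mathbb{P}(\ker\phi)\subseteq\mathbb{P}(\sym^2\mathcal{E})$ is the zero locus of the section of $\rho^*\mathcal{F}\otimes\mathcal{O}(1)$ induced by the composite $\mathcal{O}(-1)\hookrightarrow\rho^*\sym^2\mathcal{E}\xrightarrow{\rho^*\phi}\rho^*\mathcal{F}$, so it represents $c_f(\rho^*\mathcal{F}\otimes\mathcal{O}(1))=\sum_{j=0}^f\rho^*c_j(\mathcal{F})\,\zeta^{\,f-j}$. Since $\Sigma^k_{e,f}(\phi)$ is exactly the image of $\mathbb{P}(\ker\phi)\cap\mathcal{Q}_k$, I would define its virtual class as $\rho_*\big(c_f(\rho^*\mathcal{F}\otimes\mathcal{O}(1))\cdot[\mathcal{Q}_k]\big)$, taken via the localized top Chern class along the intersection so that the formula is insensitive to whether the locus attains its expected codimension one.

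Because $\Sigma^k_{e,f}(\phi)$ is a degeneracy locus attached to the $GL_e\times GL_f$-equivariant map $\phi$, its class is pulled back from the universal situation over $BGL_e\times BGL_f$ and is therefore a universal polynomial in the Chern classes of $\mathcal{E}$ and $\mathcal{F}$, homogeneous of codimension one. The only such classes are $c_1(\mathcal{E})$ and $c_1(\mathcal{F})$, so necessarily $[\Sigma^k_{e,f}(\phi)]=P\cdot c_1(\mathcal{F})+R\cdot c_1(\mathcal{E})$ for universal constants $P,R$ depending only on $e$ and $k$. It then suffices to pin down $P$ and $R$ on two convenient test families.

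To find $P$ I would take $\mathcal{E}$ trivial, so that $\mathbb{P}(\sym^2\mathcal{E})=X\times\mathbb{P}^N$ with $N=\binom{e+1}{2}-1$ and $\mathcal{Q}_k=X\times Q_k$ with $[Q_k]=A^k_e\,\zeta^{\,c}$. Multiplying by $\sum_j\rho^*c_j(\mathcal{F})\,\zeta^{\,f-j}$ and pushing forward, only the power $\zeta^N$ contributes (higher powers vanish on $\mathbb{P}^N$, lower ones push to zero); since $f+c=N+1$, the exponent $f-j+c$ equals $N$ precisely for $j=1$, leaving $\rho_*\big(A^k_e\,\rho^*c_1(\mathcal{F})\,\zeta^N\big)=A^k_e\,c_1(\mathcal{F})$, whence $P=A^k_e$. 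To find $R$ I would exploit the invariance of the construction under twisting: replacing $\mathcal{E}$ by $\mathcal{E}\otimes M$ and $\mathcal{F}$ by $\mathcal{F}\otimes M^2$ for a line bundle $M$ turns $\phi$ into $\sym^2(\mathcal{E}\otimes M)=\sym^2\mathcal{E}\otimes M^2\to\mathcal{F}\otimes M^2$, leaving all kernels and pointwise quadric ranks unchanged, so $\Sigma^k_{e,f}$ is unaltered. Since $c_1(\mathcal{F}\otimes M^2)=c_1(\mathcal{F})+2f\,c_1(M)$ and $c_1(\mathcal{E}\otimes M)=c_1(\mathcal{E})+e\,c_1(M)$, invariance of $P\cdot c_1(\mathcal{F})+R\cdot c_1(\mathcal{E})$ forces $2fP+eR=0$, i.e. $R=-\tfrac{2f}{e}P=-\tfrac{2f}{e}A^k_e$, which assembles into the claimed formula.

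The main obstacle I anticipate is making the virtual class precise rather than the naive fundamental class: one must define $[\Sigma^k_{e,f}(\phi)]$ as the pushforward of a localized Chern class so that the identity persists when the locus is empty, reducible, singular along deeper rank strata, or of excess dimension, and then check that this refined class is still governed by the universal codimension-one polynomial. The remaining substantive external input is the degree $\deg Q_k=A^k_e$, which I would import directly from Harris--Tu \cite{HT}; the delicate points are justifying the homogeneity and linearity underlying the two-parameter ansatz and confirming that the twist-invariance argument is compatible with the virtual formalism.
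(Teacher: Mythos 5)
The paper offers no proof of this statement---it is quoted directly from Farkas--Rim\'anyi \cite{FR}---so there is no internal argument to compare against; judged on its own, your derivation is essentially correct and self-contained. Realising $\Sigma^k_{e,f}(\phi)$ as $\rho\bigl(\mathbb{P}(\ker\phi)\cap\mathcal{Q}_k\bigr)$, defining the virtual class as $\rho_*\bigl(c_f(\rho^*\mathcal{F}\otimes\mathcal{O}(1))\cdot[\mathcal{Q}_k]\bigr)$, and pinning down the two universal constants by (i) the trivial-$\mathcal{E}$ test, where $f+\binom{e-k+1}{2}=\binom{e+1}{2}$ forces only the $j=1$ term to survive the pushforward and gives $P=A^k_e$, and (ii) the twist $(\mathcal{E},\mathcal{F})\mapsto(\mathcal{E}\otimes M,\mathcal{F}\otimes M^{\otimes 2})$, under which the class is literally unchanged (since $\mathcal{O}(1)$ retwists by $\rho^*M^{-2}$, leaving $\rho^*\mathcal{F}\otimes\mathcal{O}(1)$ fixed) and which forces $2fP+eR=0$---all of this is sound, with the Harris--Tu degree \cite{HT} as the only external input. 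The approach in \cite{FR} is different in kind: they establish the class of the quadric rank locus for arbitrary corank (not just the divisorial case) via equivariant/Thom-polynomial techniques, whereas your interpolation argument is cheaper but works only because codimension one leaves exactly two Chern monomials, $c_1(\mathcal{E})$ and $c_1(\mathcal{F})$, to determine.

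Two caveats. First, your pushforward computes the image cycle weighted by the degree of $\rho$ restricted to $\mathbb{P}(\ker\phi)\cap\mathcal{Q}_k$; to identify the virtual class with the honest class of the divisor $\Sigma^k_{e,f}(\phi)$ one must additionally know that a general point of $\Sigma^k_{e,f}(\phi)$ carries a unique rank-$\le k$ quadric in $\ker(\phi(x))$, appearing with multiplicity one and at a smooth point of the rank stratification. This is a transversality property of the specific $\phi$, not a formal consequence of the construction---it is exactly the role played by Theorem \ref{main thm} in the application---so it is correctly left outside the ``virtual'' statement, but worth making explicit. Second, the statement as printed has a typo: the numerator in $A^k_e$ should be $\binom{e+t}{e-k-t}$ (the index $i$ does not occur in the product), consistent with the coefficient $\alpha^k_{g,n}$ in Theorem \ref{class} and with the Harris--Tu degree you import.
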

\noindent To define the locus $\mathfrak{Quad}^k_{g,n}$, we fix integers $g,n,k$ such that
\begin{equation*}
q(g,g-n-1,2g-2-n,k)=-1.
\end{equation*}
We let
\begin{equation*}
\pi :\overline{\mathcal{M}}_{g,n+1}\to\overline{\mathcal{M}}_{g,n}
\end{equation*}
be the map that forgets the last marked point and $\mathscr{L}$ be the cotangent line bundle on $\overline{\mathcal{M}}_{g,n+1}$. We define the sheaves
\begin{equation*}
\mathcal{E}\defeq \pi_*\mathscr{L}\left(-\sum_{j=1}^{n}\delta_{0:\{j,n+1\}}\right),
\end{equation*}
and
\begin{equation*}
\mathcal{F}\defeq \pi_*\mathscr{L}^{\otimes 2}\left(-2\cdot\sum_{j=1}^{n}\delta_{0:\{j,n+1\}}\right),
\end{equation*}
and consider the natural multiplication map
\begin{equation}\label{vbmap}
\phi: \sym^2\mathcal{E}\to \mathcal{F}.
\end{equation}
We define the locus $\mathfrak{Quad}^k_{g,n}$ as 
\begin{equation*}
\mathfrak{Quad}^k_{g,n}\defeq \Sigma^k_{e,f}(\phi)\cap\mathcal{M}_{g,n},
\end{equation*}
where $e= g-n$ and $f = 3g-3-2n$. Using Theorem \ref{Gabi&Rimanyi} and Grothendieck-Riemann-Roch formula we obtain the following result:
\begin{thm}\label{Lambda,Psi,Deltairr coefficients}
The coefficients of the class $\overline{\mathfrak{Quad}}^k_{g,n}$ satisfy the following relations:
\begin{equation*}
\alpha^k_{g,n}=\prod_{t=0}^{g-n-k-1}\frac{{g-n+t\choose g-n-k-t}}{{2t+1\choose t}},\quad a = \frac{7 g-9 n+6}{g-n},\quad c = \frac{g+n-6}{g-n},\quad b_{irr} = 1,
\end{equation*}
and $b_{i:s}\geq 1$ for $0\leq i\leq g$ and $0\leq s\leq n$.
\end{thm}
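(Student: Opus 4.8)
The plan is to read off the class directly from Theorem~\ref{Gabi&Rimanyi} applied to the map \eqref{vbmap}, so that the entire problem collapses to computing the two Chern classes $c_1(\mathcal{E})$ and $c_1(\mathcal{F})$. First I would record that $e=\rk\mathcal{E}=g-n$ and $f=\rk\mathcal{F}=3g-3-2n$, so that $\alpha^k_{g,n}$ is literally the number $A^k_e$ of the theorem, and I would verify that the codimension-one hypothesis $f=\binom{e+1}{2}-\binom{e-k+1}{2}$ is equivalent to the standing assumption $q(g,g-n-1,2g-2-n,k)=-1$. With this in hand Theorem~\ref{Gabi&Rimanyi} yields the virtual class
\begin{equation*}
\overline{\mathfrak{Quad}}^k_{g,n}=A^k_e\cdot\left(c_1(\mathcal{F})-\tfrac{2f}{e}\,c_1(\mathcal{E})\right),\qquad \tfrac{2f}{e}=\frac{6g-6-4n}{g-n}.
\end{equation*}

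Next I would compute $c_1(\mathcal{E})$ and $c_1(\mathcal{F})$ by Grothendieck--Riemann--Roch along $\pi$. Writing $K=c_1(\omega_\pi)$ for the relative dualizing sheaf and $\Delta_j=\delta_{0:\{j,n+1\}}$, the sheaf $\mathcal{A}\defeq\mathscr{L}(-\sum_j\Delta_j)$ restricts on a general fibre to $K_C-\sum_j p_j$; hence $\mathcal{F}=\pi_!\mathcal{A}^{\otimes 2}$ has no higher direct image, while $\pi_!\mathcal{A}=\mathcal{E}-R^1\pi_*\mathcal{A}$. Since $\mathscr{L}$ is fibrewise $\omega_C$, the relevant class is $D\defeq c_1(\mathcal{A})=K-\sum_j\Delta_j$, and Mumford's GRR formula reads
\begin{equation*}
c_1\!\left(\pi_!\mathcal{A}^{\otimes m}\right)=\tfrac12\,\pi_*\!\left(m^2D^2-mD\cdot K\right)+\lambda .
\end{equation*}
Everything then reduces to the standard fibre integrals $\pi_*(K^2)=12\lambda-\delta$, $\pi_*(K\Delta_j)=\psi_j$, $\pi_*(\Delta_j^2)=-\psi_j$ and $\pi_*(\Delta_i\Delta_j)=0$ for $i\neq j$, with $\delta$ the total boundary. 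Substituting these gives $c_1(\mathcal{F})=13\lambda-5\sum_j\psi_j-\delta$ and $c_1(\pi_!\mathcal{A})=\lambda-\sum_j\psi_j$.

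To pass from $\pi_!\mathcal{A}$ to $\mathcal{E}$ I would use relative duality: since $\omega_\pi\otimes\mathcal{A}^\vee=\mathcal{O}(\sum_j\Delta_j)$, one has $R^1\pi_*\mathcal{A}=\bigl(\pi_*\mathcal{O}(\sum_j\Delta_j)\bigr)^\vee$, and on the interior the tautological section identifies $\pi_*\mathcal{O}(\sum_j\Delta_j)$ with $\mathcal{O}$, so $c_1(\mathcal{E})=\lambda-\sum_j\psi_j$ there. Feeding the interior parts into the class above produces at once
\begin{equation*}
a=13-\tfrac{6g-6-4n}{g-n}=\frac{7g-9n+6}{g-n},\qquad c=-5+\tfrac{6g-6-4n}{g-n}=\frac{g+n-6}{g-n},
\end{equation*}
and, since $R^1\pi_*\mathcal{A}$ does not jump over a general irreducible nodal curve, the coefficient of $\delta_{irr}$ is $-1$, i.e. $b_{irr}=1$.

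The hard part will be the reducible boundary coefficients $b_{i:s}$. Here two contributions must be tracked simultaneously: the $-\delta$ term inherited from $c_1(\mathcal{F})$, and the boundary-supported class $c_1(R^1\pi_*\mathcal{A})$, which is nonzero precisely along those $\delta_{i:S}$ where the distribution of $p_1,\dots,p_n$ between the two components forces $h^0(\mathcal{O}_{C}(\sum_j p_j))$ to jump. I would compute this jump by restricting the sequence $0\to\mathcal{O}\to\mathcal{O}(\sum_j\Delta_j)\to\mathcal{O}(\sum_j\Delta_j)\vert_{\sum_j\Delta_j}\to 0$ to each boundary divisor and pushing forward, then combine the result with the $-\delta$ term and the factor $\tfrac{2f}{e}$. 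The outcome should be an explicit rational function of $g,n,i,s$; establishing the uniform positivity $b_{i:s}\geq 1$ (rather than merely evaluating each coefficient) is the genuine obstacle, and for $k=4$ one must in addition extract the closed form $b_{0:s}=\tfrac{s(gs-3s+n-3)}{g-n}$ recorded in Theorem~\ref{class}.
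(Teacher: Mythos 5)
Your computation of $\alpha^k_{g,n}$, $a$ and $c$ is sound and essentially the paper's: both reduce to $c_1(\mathcal{F})=13\lambda-5\sum_j\psi_j-\delta$ via Grothendieck--Riemann--Roch and to $c_1(\mathcal{E})=\lambda-\sum_j\psi_j$; the paper obtains the latter from the evaluation sequence $0\to\mathcal{E}\to\pi_*\mathscr{L}\to\pi_*(\mathscr{L}|_{\sum_j\delta_{0:\{j,n+1\}}})\to 0$ rather than from relative duality, but that difference is cosmetic. The first genuine gap is the claim $b_{irr}=1$. The fact that $R^1\pi_*\mathcal{A}$ does not jump over a general point of $\Delta_{irr}$ only guarantees that the virtual-class formula of Theorem \ref{Gabi&Rimanyi} is valid there; it gives $b_{irr}=1+d_{irr}$, where $d_{irr}\geq 0$ is the multiplicity of the excess component of $\Sigma^k_{e,f}(\phi)$ supported on $\Delta_{irr}$. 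To conclude $d_{irr}=0$ you must prove $\Delta_{irr}\not\subset\Sigma^k_{e,f}(\phi)$, i.e.\ extend the transversality statement of Theorem \ref{main thm} to the canonical image of a general irreducible nodal curve; the paper does exactly this by rerunning the arguments of Lemma \ref{Canonical Curves} and Proposition \ref{g-d+r=0,1} for such curves. Your proposal contains no such step.

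The second gap concerns the coefficients $b_{i:s}$ over the divisors $\Delta_{i:S}$ with $i<s$ or $g-i<n-s$. There the sheaves $\mathcal{E}$ and $\mathcal{F}$ fail to be locally free along an entire divisor, so one cannot extract $b_{i:s}$ by computing $c_1(R^1\pi_*\mathcal{A})$ from the restriction sequence and substituting into the degeneracy-locus formula: that formula presupposes a morphism of vector bundles in codimension two. The paper's device is to replace $\mathscr{L}$ by the twist $\mathscr{L}'=\mathscr{L}\bigl(\sum(i-s-1)\delta_{i:S\cup\{n+1\}}\bigr)$, which restores local freeness away from codimension two, to evaluate the resulting virtual class on explicit test curves to obtain $\tilde b_{i:s}$, and to check $\tilde b_{i:s}\geq 1$ directly; since $b_{i:s}=\tilde b_{i:s}+d_{i:s}$ with $d_{i:s}\geq 0$, the bound follows. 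You correctly identify this as the remaining obstacle, but your sketch supplies neither the twist nor the positivity verification, so as written the claims $b_{irr}=1$ and $b_{i:s}\geq 1$ remain unproved.
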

\begin{proof}
The proof follows the same line of arguments as in the proof of Theorem 2.1 in \cite{Ka}. First note that the evaluation map
\begin{equation*}
\pi_*\mathscr{L}\xrightarrow{ev} \pi_*\left(\mathscr{L}|_{\sum_{j=1}^{n}\delta_{0:\{j,n+1\}}}\right)
\end{equation*}
fails to be surjective over the boundary divisor $\Delta_{i:S}$ when $i<s$ or $g-i<n-s$, where $s=|S|$. Therefore we break our analysis into two parts. First, we let $\widetilde{\mathcal{M}}_{g,n}$ be the partial compactification defined as the union of $\mathcal{M}_{g,n}$ with the boundary divisors $\Delta_{i:S}$, such that $s\leq i$ and $n-s\leq g-i$. We will deal with the case where $i<s$ or $g-i<n-s$ later.

The sheaf $\mathcal{E}$ sits in the following sequence of sheaves over $\widetilde{\mathcal{M}}_{g,n}$:
\begin{equation*}
0 \rightarrow \mathcal{E}\rightarrow \pi_*\mathscr{L}\xrightarrow{ev} \pi_*\left(\mathscr{L}|_{\sum_{j=1}^{n}\delta_{0:\{j,n+1\}}}\right)\rightarrow 0.
\end{equation*}
Since we are in the range $s\leq i$ and $n-s\leq g-i$, it easily follows that the evaluation map $ev$ is surjective in codimension 2. Therefore we obtain that
\begin{equation*}
c_1(\mathcal{E})=\lambda-\sum_{j=1}^{n}\psi_j.
\end{equation*}
Next we use Grothendieck-Riemann-Roch formula to compute $c_1(\mathcal{F})$. We let 
\begin{equation*}
\mathscr{M} = \mathscr{L}^{\otimes 2}\left(-2\cdot\sum_{j=1}^{n}\delta_{0:\{j,n+1\}}\right)
\end{equation*}
and write
\begin{equation*}
\textnormal{ch}\left(\pi_! \mathscr{M} \right) = \pi_*\left(\textnormal{ch}(\mathscr{M})\cdot \textnormal{Td}(\pi) \right).
\end{equation*}
Collecting degree 1 terms on both sides, we obtain that
\begin{equation*}
c_1(\mathcal{F}) = \pi_*\left[\left(1 + c_1(\mathscr{M}) + \frac{c_1(\mathscr{M})^2}{2}\right)\cdot \left(1 - \frac{c_1(\mathscr{L})}{2} + \frac{c_1(\mathscr{L})^2 + \Sigma}{12}\right)\right],
\end{equation*}
where $\Sigma$ denotes the locus of pointed singular curves in $\overline{\mathcal{M}}_{g,n+1}$ where the point with label $n+1$ hits the singular locus of the curve. Using basic intersection theory and well known formulas relating push forwards of codimension 2 classes in $\overline{\mathcal{M}}_{g,n+1}$ with divisor classes in $\overline{\mathcal{M}}_{g,n}$ (see Section 17 in \cite{ACG2}) we obtain that 
\begin{equation*}
c_1(\mathcal{F})=13\cdot\lambda -5\cdot \sum_{j=1}^{n}\psi_j-\delta,
\end{equation*}
where $\delta$ denotes the class of the whole boundary. Using Theorem \ref{Gabi&Rimanyi} we obtain the following formula for the class of $\Sigma^k_{e,f}(\phi)$:
\begin{equation}\label{Outcome of GRR+Porteous}
[\Sigma^k_{e,f}(\phi)] = A^k_{g-n}\cdot\left(\frac{7 g-9 n+6}{g-n}\cdot\lambda + \frac{g+n-6}{g-n}\cdot \sum_{j=1}^{n}\psi_j - \delta\right).
\end{equation}
This way we obtained the coefficients $a$ and $c$. To conclude that $b_{irr} = 1$, one also needs to show that $\Delta_{irr}\not\subset \Sigma^k_{e,f}(\phi)$. The arguments in Lemma \ref{Canonical Curves} and Proposition \ref{g-d+r=0,1} can be repeated verbatim for the canonical image of an irreducible nodal curve to show that Theorem \ref{main thm} holds true also for general elements of $\Delta_{irr}$. We skip these details.

From the expression (\ref{Outcome of GRR+Porteous}), we can read off the bound $b_{i:s}\geq 1$ whenever we have that $s\leq i$ and $n-s\leq g-i$. To deal with the boundary coefficients $b_{i:s}$ when $i<s$ or $g-i<n-s$, we introduce the twist
\begin{equation*}
\mathscr{L}'\defeq \mathscr{L}\left(\sum_{\substack{0\leq i\leq g\\ i< s\\|S| = s}}(i-s-1)\cdot\delta_{i:S\cup\{n+1\}}\right),
\end{equation*}
and define the sheaves
\begin{equation*}
\mathcal{E'}\defeq \pi_*\mathscr{L'}\left(-\sum_{j=1}^{n}\delta_{0:\{j,n+1\}}\right),
\end{equation*}
and
\begin{equation*}
\mathcal{F'}\defeq \pi_*\mathscr{L'}^{\otimes 2}\left(-2\cdot\sum_{j=1}^{n}\delta_{0:\{j,n+1\}}\right).
\end{equation*}
It can easily be checked that the ranks of $\mathcal{E'}$ and $\mathcal{F'}$ stay constant away from loci of codimension $2$ and thus we have a morphism of vector bundles in codimension $2$
\begin{equation}\label{vbmap2}
\phi': \sym^2\mathcal{E'}\to \mathcal{F'}.
\end{equation}
extending $\phi$. Between the classes $[\Sigma^k_{e,f}(\phi')]$ and $\overline{\mathfrak{Quad}}^k_{g,n}$, we have the relation
\begin{equation}\label{d_i:s relation}
[\Sigma^k_{e,f}(\phi')]=\overline{\mathfrak{Quad}}^k_{g,n}+\sum d_{i:s}\cdot\delta_{i:S},
\end{equation}
where $d_{i:s}\geq 0$. Letting $\tilde{b}_{i:s}$ denote the coefficient of $\delta_{i:S}$ in the class $[\Sigma^k_{e,f}(\phi')]$, we obtain the equality $\tilde{b}_{i:s} = b_{i:s}-d_{i:s}$.

We computed the intersection of the Chern classes of $\mathcal{E'}$ and $\mathcal{F'}$ with simple test curves in Lemma 2.2 in \cite{Ka}. Using that and Theorem \ref{Gabi&Rimanyi}, we compute
\begin{equation}\label{tilde b_i:s}
\tilde{b}_{i:s} = \frac{1}{g-n}\left(-i^2 (g-2 n+3)+i (2 g-2 s n+6 s-3 n+3)+s ((g-3) s+n-3)\right),
\end{equation}
for all $i,s$ such that $i<s$. It is easy to see that this quantity is always $\geq 1$.
\end{proof}
We believe that the coefficients $d_{0:s}$ in equation (\ref{d_i:s relation}) are equal to zero and therefore $b_{0:s} = \tilde{b}_{0:s}$, but we could prove this only for $k=4$:
\begin{thm}
For $k=4$ and $S\subseteq\{1,\dots ,n\}$ the general element of $\Delta_{0:S}$ does not lie in $\Sigma^k_{e,f}(\phi')$ and we have that
\begin{equation*}
b_{0:s}= \frac{s (gs - 3s + n - 3)}{g-n}\cdot
\end{equation*}
\end{thm}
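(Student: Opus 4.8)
The plan is to reduce the statement to the vanishing $d_{0:s}=0$ and then to a concrete Brill--Noether question on the generic curve. First observe that setting $i=0$ in formula (\ref{tilde b_i:s}) gives exactly
\begin{equation*}
\tilde b_{0:s}=\frac{s((g-3)s+n-3)}{g-n}=\frac{s(gs-3s+n-3)}{g-n},
\end{equation*}
so by the relation $\tilde b_{0:s}=b_{0:s}-d_{0:s}$ the asserted formula is equivalent to $d_{0:s}=0$. Since $d_{0:s}\geq 0$ measures the multiplicity of $\Delta_{0:S}$ inside the cycle $[\Sigma^k_{e,f}(\phi')]$, it suffices to prove that the general element of $\Delta_{0:S}$ does not lie in $\Sigma^k_{e,f}(\phi')$ for $k=4$. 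Note that the standing hypothesis $q(g,g-n-1,2g-2-n,4)=-1$ forces $g=2n+3$, so we sit squarely in the canonical-type base case $g-d+r=1$ of Theorem \ref{main thm}.

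Next I would compute the fibre of $\phi'$ over a general point $[X]=[C\cup_y R]\in\Delta_{0:S}$, where $R\cong\mathbb{P}^1$ is a rational tail carrying the markings in $S$ and $y$ is the node. Only the $i=0$ summand of the twist defining $\mathscr{L}'$ is relevant here, and on the universal curve over $\Delta_{0:S}$ the divisor $\delta_{0:S\cup\{n+1\}}$ restricts to the component $R$; hence $\mathscr{L}'|_X=\omega_X(-(s+1)R)$, which yields $\mathscr{L}'|_R=\mathcal{O}_R(s)$ and $\mathscr{L}'|_C=K_C-sy$. Incorporating the further twist by $-\sum_j\delta_{0:\{j,n+1\}}$, the restricted sheaf has degree $0$ on $R$ and degree $2g-2-n$ on $C$, and a short computation with the node-gluing shows that
\begin{equation*}
\mathcal{E}'|_X\cong H^0(C,M),\qquad \mathcal{F}'|_X\cong H^0(C,M^{\otimes 2}),\qquad M\defeq K_C-sy-\sum_{j\notin S}p_j,
\end{equation*}
with $\phi'(X)$ the multiplication map; the tail $R$ collapses to the image of $y$, so its sections are constant and contribute nothing. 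Consequently $\ker\phi'(X)=I_2(C,M)$ and $[X]\in\Sigma^4_{e,f}(\phi')$ if and only if $Q_4(C,M)\neq\emptyset$.

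It then remains to prove $Q_4(C,M)=\emptyset$ for a general curve $C$ and general points $y,\{p_j\}$. Writing $M=K_C-E$ with $E=sy+\sum_{j\notin S}p_j$, Lemma \ref{less stupid lemma} identifies $\dim Q_4(C,M)$ with $\dim Q_4(C,K_C)[\langle E\rangle]$, the rank-$4$ quadrics through the canonical curve whose vertex contains the linear span $\langle E\rangle$. When $s=1$ the divisor $E$ is a general reduced divisor of degree $n$, so $M$ is a general $g^{g-n-1}_{2g-2-n}$ with $g-d+r=1$ and $\rho=n\geq 0$, and Theorem \ref{main thm} gives $Q_4(C,M)=\emptyset$ at once. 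For $s\geq 2$ I would invoke the pencil correspondence of Lemma \ref{Canonical Curves} that is special to rank $4$: an element of $Q_4(C,M)$ amounts to a pencil $A\in W^1_a(C)$ together with a two-dimensional subspace of $H^0(M-A)=H^0(K_C-A-E)$, so the relevant dimension is $\dim W^1_a+\dim\textnormal{Gr}(2,h^0(K_C-A-E))$. Provided $E$ imposes independent conditions on $|K_C-A|$, this evaluates to $q(g,g-n-1,2g-2-n,4)=-1$ exactly as in the reduced case, forcing emptiness.

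The main obstacle is precisely the multiple point $sy$ in $E$. For general $A\in W^1_a$ the jet $sy$ does impose $s$ independent conditions on $|K_C-A|$, since $s\leq n\leq\dim|K_C-A|+1$ in the only relevant range $a\leq g-n-1$ (where $\textnormal{Gr}(2,\cdot)$ is nonempty) and a general $y$ is not an inflection point of order $\geq s$; but one must still exclude the special pencils $A$ for which $y$ is a high-order inflection of $|K_C-A|$, as these are exactly the loci where $h^0(K_C-A-E)$ jumps. I would control this by an incidence-variety count on pairs $(A,Q)$ showing that the inflectionary sublocus of $W^1_a$ is small enough that the drop in $\dim W^1_a$ always dominates the gain in the Grassmannian factor, keeping the total dimension $\leq -1$ for general $(C,y,\{p_j\})$. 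This inflection bookkeeping, which leans on the explicit pencil description of rank-$4$ quadrics, is what fails to generalise to higher $k$ and is the reason the statement is proved only for $k=4$. Once it is established, $Q_4(C,M)=\emptyset$, hence $d_{0:s}=0$ and $b_{0:s}=\tilde b_{0:s}$, which is the claimed formula.
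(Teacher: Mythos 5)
Your reduction is the same as the paper's: you correctly observe that the claim is equivalent to $d_{0:s}=0$, i.e.\ to showing that the general point of $\Delta_{0:S}$ is not in $\Sigma^4_{e,f}(\phi')$, you compute the fibre of $\phi'$ over such a point and identify the question with the emptiness of $Q_4\bigl(C, K_C - sy - \sum_{j\notin S}p_j\bigr)$, and you note that $g=2n+3$. The case $s=1$ via Theorem \ref{main thm} is fine. But the heart of the matter is exactly the case $s\geq 2$, where the non-reduced divisor $sy$ takes you outside the range of Theorem \ref{main thm}, and there your argument stops at a plan: ``I would control this by an incidence-variety count on pairs $(A,Q)$ showing that the inflectionary sublocus of $W^1_a$ is small enough that the drop in $\dim W^1_a$ always dominates the gain in the Grassmannian factor.'' This is precisely the statement that has to be proved, and it is not something you can extract from a naive incidence count on a fixed curve: bounding, for \emph{every} $a$ and every prescribed inflectionary behaviour, the dimension of the locus of linear series on a general curve with given ramification at a general point is the content of the Eisenbud--Harris pointed Brill--Noether theorem, which requires a degeneration argument and is not reproved by dimension bookkeeping on $W^1_a(C)$ alone.

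The paper closes this gap as follows. It first specializes all the points $p_j$, $j\in S^c$, to $p_0=y$ (legitimate, since non-existence of a rank $\leq 4$ quadric is an open condition), reducing to the linear series $|K_C - n\cdot p_0|$. If a rank $4$ quadric existed, the associated pencil $A=g^1_a$ (with $a\geq n+3$ by Brill--Noether) would satisfy $h^0(K_C-A-n\cdot p_0)\geq 2$, which translates into the ramification bound $\alpha^{K_C-A}(p_0)\geq (0,\dots,0,a-n-2,a-n-2)$. The adjusted Brill--Noether number of $|K_C-A|$ at $p_0$ then computes to $\leq -1$, contradicting Proposition 1.2 of \cite{EH} for a general pointed curve $[C,p_0]$. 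This is exactly the quantitative input your sketch is missing; without it (or an equivalent substitute) the proof is incomplete for $s\geq 2$.
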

\begin{proof}
Let $[C,p_{i_1}, \dots , p_{i_k}]$ and $[R, p_{j_1}, \dots , p_{j_\ell}]$ be general pointed curves in $\mathcal{M}_{g,S^c}$ and in $\mathcal{M}_{0,S}$, respectively. We choose general points $p_0\in C$ and $q_0\in R$ and identify these to obtain the pointed curve $[X,p_1,\dots p_n]\in\Delta_{0:S}$. The fiber of the vector bundle map (\ref{vbmap2}) over this moduli point is equal to
\begin{equation*}
\sym^2 H^0\left(K_C - \sum_{i\in S^c} p_i - s\cdot p_0\right)\to H^0\left(K_C^{\otimes 2}- 2\cdot\sum_{i\in S^c} p_i - 2s\cdot p_0\right).
\end{equation*}
Therefore, we need to show that there are no rank 4 quadrics containing the image of $C$ under the map given by the linear series $|K_C - \sum_{i\in S^c} p_i - s\cdot p_0|$. It is clearly sufficient to specialize to the case where $p_i = p_0$ for all $i\in S^c$ and check this claim for the linear series $|K_C - n\cdot p_0|$.

First note that for $k=4$ the numerical condition $q(g,g-n-1,2g-2-n,k)=-1$ implies that $g=2n+3$. Let $A$ be a $g^1_a$ on the curve $C$ such that the pencil pair $(A, K_C-A-n\cdot p_0)$ corresponds to a rank 4 quadric containing $C$. By Brill-Noether theory, we have that $a\geq n+3$. Since $|K_C - A|$ is a $g^{g-a}_{2g-2-a}$, the condition that
\begin{equation*}
h^0(K_C-A-n\cdot p_0)\geq 2
\end{equation*}
imposes non trivial conditions on the ramification type of the linear series $|K_C-A|$ at the point $p_0$. Precisely, we have the following inequality for the ramification sequence of $|K_C-A|$:
\begin{equation}\label{Ramification Type}
\alpha^{K_C-A}(p_0)\geq (0,\dots ,0, a-n-2,a-n-2).
\end{equation}
We know from Brill-Noether theory that a $g^r_d$ subject to a ramification condition
\begin{equation*}
\alpha^{g^r_d}(p)\geq (\alpha_0,\alpha_1,\dots , \alpha_r)
\end{equation*}
at a general point $p\in C$ has non-negative adjusted Brill-Noether number, which is defined as
\begin{equation*}
\rho(g,r,d;\alpha) = g - (r+1)(g-d+r) - \sum_{i=0}^r \alpha_i
\end{equation*}
(See Proposition 1.2 in \cite{EH} and the discussion preceding it). Using the inequality (\ref{Ramification Type}), we estimate the adjusted Brill-Noether number of $|K_C - A|$ at $p_0$ as
\begin{equation*}
\rho(g,g-a,2g-2-a;\alpha^{K_C-A}(p_0))\leq -1.
\end{equation*}
Therefore for a general pointed curve $[C,p_0]\in\mathcal{M}_{g,1}$ such a linear series does not exist, which in turn implies that $C$ is not contained in a rank 4 quadric when mapped to the projective space via the linear series $|K_C - n\cdot p_0|$.
\end{proof}
In the proof of Theorem \ref{Divisor on M15,8} we use the following result from \cite{FR}:
\begin{thm}[\cite{FR}]\label{Degenerate pencil class}
Let $X$ be an algebraic variety and
\begin{equation*}
\phi:\sym^2(\mathcal{E})\rightarrow \mathcal{F}
\end{equation*}
be a morphism of vector bundles over $X$ where $\textnormal{rk}(\mathcal{E})=e$ and $\textnormal{rk}(\mathcal{F})={e+1\choose 2}-2$. The class of the virtual divisor 
\begin{equation*}
\mathfrak{Dp}(\phi):=\left\lbrace x\in X\mid \ker(\phi(x)) \textnormal{ is a degenerate pencil}\right\rbrace
\end{equation*}
is given by the formula
\begin{equation*}
[\mathfrak{Dp}(\phi)]=(e-1)\cdot \left(e\cdot c_1(\mathcal{F})-(e^2+e-4)\cdot c_1(\mathcal{E})\right)\in H^2(X,\mathbb Q).
\end{equation*}
\end{thm}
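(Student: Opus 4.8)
The plan is to reduce everything to the classical discriminant of binary forms. Over the locus where $\phi$ is surjective — which is the complement of a subset of codimension at least two, so that a virtual divisor class is unaffected — the kernel $\mathcal{K}\defeq\ker(\phi)$ is a rank-two subbundle fitting into the exact sequence
\[
0 \to \mathcal{K} \to \sym^2\mathcal{E} \xrightarrow{\phi} \mathcal{F} \to 0,
\]
whose projectivization $\mathbb{P}(\mathcal{K}) \to X$ is the family of pencils of quadrics. The geometric content of \emph{degenerate} is that the line $\mathbb{P}(\mathcal{K}_x)\cong\mathbb{P}^1$, which meets the discriminant hypersurface of singular quadrics in $e$ points, meets it non-transversally; equivalently, the restriction of the quadric-determinant to the pencil is a binary form of degree $e$ with a repeated root. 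Thus $\mathfrak{Dp}(\phi)$ is precisely the locus where this binary form has vanishing discriminant.

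The next step is to realize this binary form as a global section. The determinant of a quadratic form, viewed as a symmetric map $\mathcal{E}^\vee \to \mathcal{E}$, is a homogeneous polynomial map of degree $e$,
\[
\det : \sym^2\mathcal{E} \to (\det\mathcal{E})^{\otimes 2}.
\]
Restricting it to the subbundle $\mathcal{K}$ produces the universal binary $e$-form
\[
F \in \sym^e(\mathcal{K}^\vee) \otimes (\det\mathcal{E})^{\otimes 2},
\]
whose fibrewise vanishing cuts out the $e$ singular members of each pencil.

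I would then invoke the transformation law of the discriminant of binary $e$-forms: it is a polynomial of degree $2(e-1)$ in the coefficients and, under $g\in GL_2$, scales by $(\det g)^{e(e-1)}$ (checked directly for $e=2$, where $b^2-4ac$ scales by $(\det g)^2$). Consequently $\mathrm{disc}(F)$ is a global section of the line bundle
\[
\mathcal{N}\defeq(\det\mathcal{K})^{\otimes -e(e-1)} \otimes (\det\mathcal{E})^{\otimes 4(e-1)},
\]
the first factor recording the $GL(\mathcal{K})$-weight together with $\det\mathcal{K}^\vee=(\det\mathcal{K})^{-1}$, and the second the degree-$2(e-1)$ homogeneity applied to the twist $(\det\mathcal{E})^{\otimes 2}$. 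The virtual class is then $c_1(\mathcal{N})$. Substituting $c_1(\mathcal{K})=(e+1)c_1(\mathcal{E})-c_1(\mathcal{F})$, obtained from the exact sequence together with $c_1(\sym^2\mathcal{E})=(e+1)c_1(\mathcal{E})$, a short simplification yields
\[
c_1(\mathcal{N})=(e-1)\bigl(e\cdot c_1(\mathcal{F})-(e^2+e-4)\cdot c_1(\mathcal{E})\bigr),
\]
which is the asserted formula.

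The main obstacle is not the Chern-class bookkeeping, which is routine once $\mathcal{N}$ is identified, but the passage from the set-theoretic description to the \emph{virtual} class. One must argue that $\mathrm{disc}(F)$ records the degenerate locus with the correct multiplicities and that the locus where $\phi$ drops rank contributes only in codimension two, hence not to a divisor class. The truly delicate points are the deeper degenerations of the pencil — where it fails to be two-dimensional, or where the binary form acquires a triple root or two coincident pairs of roots — and it is exactly the agreement to treat these as virtual contributions that makes the result a statement in $H^2(X,\mathbb{Q})$ rather than an assertion about an honest effective divisor.
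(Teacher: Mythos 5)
The paper offers no proof of this statement: it is quoted directly from \cite{FR}, so there is no internal argument to compare against, and your proposal can only be measured against the source. Your derivation is correct and is essentially the one in \cite{FR}: over the open set where $\phi$ is surjective (complement of a locus of expected codimension three), restrict the determinant $\det\colon \sym^2\mathcal{E}\to(\det\mathcal{E})^{\otimes 2}$ to the rank-two kernel bundle $\mathcal{K}$ to get $F\in H^0\left(\sym^e\mathcal{K}^\vee\otimes(\det\mathcal{E})^{\otimes 2}\right)$, and take $c_1$ of the line bundle carrying $\mathrm{disc}(F)$. The numerology checks out: the discriminant of a binary $e$-form has degree $2(e-1)$ in the coefficients and $GL_2$-weight $e(e-1)$, so $\mathrm{disc}(F)$ is a section of $(\det\mathcal{K})^{\otimes -e(e-1)}\otimes(\det\mathcal{E})^{\otimes 4(e-1)}$, and substituting $c_1(\mathcal{K})=(e+1)c_1(\mathcal{E})-c_1(\mathcal{F})$ gives exactly $(e-1)\left(e\cdot c_1(\mathcal{F})-(e^2+e-4)\cdot c_1(\mathcal{E})\right)$. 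You also correctly identify, and appropriately defer, the only delicate point: that the zero divisor of $\mathrm{disc}(F)$ computes the \emph{virtual} class, with questions of multiplicity and of the deeper degeneracy loci being precisely what that qualifier absorbs.
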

\begin{proof}[Proof of Theorem \ref{Divisor on M15,8}]
We first show that 
\begin{equation*}
\overline{\mathfrak{D}}_{15,8}\neq \overline{\mathcal{M}}_{15,8}.
\end{equation*}
To do this it suffices to exhibit an element $[C,p_1,\dots , p_8]\in\mathcal{M}_{15,8}$ such that the image of $C$ under the map induced by $|K_C-p_1-\dots - p_8|$ lies in a non-degenerate pencil of quadrics. In other words, it is sufficient to show that there exists a smooth curve $C\subseteq\mathbb{P}^6$ of genus $15$ and degree $20$ such that the pencil $I_2(C, \mathcal{O}_C(2))$ is non-degenerate. To this end, we pick 15 general points on $\mathbb{P}^2$ and consider the blown up surface $X\defeq \textnormal{Bl}_{15}(\mathbb{P}^2)$. Using Macaulay, we show that the linear system
\begin{equation*}
H = 7h - 2(E_1+\dots + E_7) -E_8-\dots - E_{15}
\end{equation*}
embeds $X$ to $\mathbb{P}^6$, where $h$ is the class of a line in $\mathbb{P}^2$ and $E_i$ is the exceptional divisor corresponding to the  $i^{th}$ point. We also check that $\dim I_2(X, \mathcal{O}_X(2))=2$ and that the pencil is non-degenerate. Next, we let $C$ to be a general element of the linear system
\begin{equation*}
\left| 10h-3(E_1+E_2+E_3)-2(E_4+\dots +E_{15})\right|.
\end{equation*}
Again using Macaulay we check that this linear system is base point free, hence by Bertini's theorem we obtain that $C$ is smooth. It is easy to see that the genus of $C$ is $15$ and $C.H=20$. We also check that
\begin{equation*}
H^0(\mathcal{O}_X(2H-C))=0, 
\end{equation*}
which implies that the restriction map
\begin{equation*}
\rho:H^0(\mathcal{O}_X(2))\to H^0(\mathcal{O}_C(2))
\end{equation*}
is injective. Therefore the map
\begin{equation*}
H^0(\mathcal{O}_{\mathbb{P}^6}(2))\to H^0(\mathcal{O}_C(2))
\end{equation*}
factors through $\rho$ and it follows that
\begin{equation*}
I_2(X,\mathcal{O}_X(2))=I_2(C, \mathcal{O}_C(2)).
\end{equation*}
To compute the class of $\overline{\mathfrak{D}}_{15,8}$, we let
\begin{equation*}
\mathcal{E}\defeq \pi_*\mathscr{L}\left(-\sum_{j=1}^{8}\delta_{0:\{j,9\}}\right),
\end{equation*}
and
\begin{equation*}
\mathcal{F}\defeq \pi_*\mathscr{L}^{\otimes 2}\left(-2\cdot\sum_{j=1}^{8}\delta_{0:\{j,9\}}\right),
\end{equation*}
and consider the morphism of vector bundles
\begin{equation}
\phi: \sym^2\mathcal{E}\to \mathcal{F}
\end{equation}
over the partial compactification $\widetilde{\mathcal{M}}_{g,n}$ (Here we use the notation in the proof of Theorem \ref{Lambda,Psi,Deltairr coefficients}). Using Theorem \ref{Degenerate pencil class}, we compute that
\begin{equation*}
[\mathfrak{Dp}(\phi)] = 6\cdot\left(39\cdot\lambda +17\cdot\sum_{j=1}^8 \psi_j-7\cdot\delta\right).
\end{equation*}
It follows that the boundary coefficients $b_{irr}$ and $b_{i:s}$ of $\overline{\mathfrak{D}}_{15,8}$ are at least $7$ when $s\leq i$ and $ n-s\leq g-i$. To obtain a lower bound for $b_{i:s}$ in the remaining cases we use the extension (\ref{vbmap2}) of the vector bundle map $\phi$. Analogously to the proof of Theorem \ref{Lambda,Psi,Deltairr coefficients}, we write
\begin{equation}
[\mathfrak{Dp}(\phi')]=6\cdot\left(39\cdot\lambda +17\cdot\sum_{j=1}^8 \psi_j-7\cdot \delta_{irr}-\sum_{i,s\geq 0}\tilde{b}_{i:s}\cdot\sum_{|S|=s}\delta_{i:S}\right),
\end{equation}
and compute that
\begin{equation*}
\tilde{b}_{i:s} = -2 i^2+i (9-10 s)+s (12 s+5)
\end{equation*}
for all $i,s$ such that $i<s$. Clearly, $b_{i:s}\geq \tilde{b}_{i:s}\geq 7$.
\end{proof}
We conclude the paper with the proof of Corollary \ref{M15,9 is of general type}.
\begin{proof}[Proof of Corollary \ref{M15,9 is of general type}]
We first recall that the canonical class of $\overline{\mathcal{M}}_{g,n}$ is well known to be
\begin{equation*}
K_{\overline{\mathcal{M}}_{g,n}}=13\cdot\lambda-2\cdot\delta_{irr}+ \sum_{j=1}^n\psi_j-2\cdot \sum_{\substack{S\in P\\|S|\geq 2}}\delta_{0:S}-3\cdot \sum_{S\in P}\delta_{1:S}-2\cdot\sum_{i=2}^{\left \lfloor{g/2}\right \rfloor}\sum_{S\in P}\delta_{i:S},
\end{equation*}
where $P$ denotes the power set of $\{1,\dots,n\}$. We consider the map
\begin{equation*}
\pi_j:\overline{\mathcal{M}}_{15,9}\to \overline{\mathcal{M}}_{15,8}
\end{equation*}
that forgets the point labeled by $j$. Pulling back the divisor $\overline{\mathfrak{D}}_{15,8}$ via $\pi_j$ for every $j\in\{1,\dots ,9\}$ and taking their average, we obtain the effective class
\begin{equation*}
\mathcal{Z}_{15,9} = 351\cdot\lambda +136\cdot \sum_{j=1}^9\psi_j-b_{irr}\cdot \delta_{irr}-\sum_{i,s\geq 0}b_{i:s}\cdot\sum_{|S|=s}\delta_{i:S},
\end{equation*}
where $b_{irr},b_{i:s}\geq 63$ for all $i,s\geq 0$.

On $\overline{\mathcal{M}}_{15,9}$ we also have the pullback of the Brill-Noether divisor from $\overline{\mathcal{M}}_{15}$, whose class is given by the following formula:
\begin{equation*}
BN_{15} = 54\cdot\lambda - 8\cdot\delta_{irr}-\dots
\end{equation*}
Using these classes we can express $K_{\overline{\mathcal{M}}_{15,9}}$ as follows:
\begin{equation*}
K_{\overline{\mathcal{M}}_{15,9}} = \frac{25}{297}\cdot\sum_{j=1}^9\psi_j + \frac{2}{297}\cdot \mathcal{Z}_{15,9} + \frac{13}{66}\cdot BN_{15} + E,
\end{equation*}
where $E$ is an effective divisor supported on the boundary of $\overline{\mathcal{M}}_{15,9}$. Since the class $\sum_{j=1}^9\psi_j$ is \emph{big}, the result follows.
\end{proof}

\bibliographystyle{amsalpha}

\end{document}